\newtheorem{theorem}{Theorem}
\newtheorem{proposition}[theorem]{Proposition}
\newtheorem{lemma}[theorem]{Lemma}
\newtheorem{corollary}[theorem]{Corollary}
\theoremstyle{definition}
\newtheorem{constructions}[theorem]{\noindent{\font\=cmssi10\Constructions}\bf}
\newtheorem{definitions}[theorem]{\font\=cmssi10\Definitions\bf}
\newtheorem{remark}[theorem]{\font\=cmssi10\Remark\bf}
\newcommand\Cal{\mathcal}
\newcommand\bosy{\boldsymbol}
\newcommand\romanbb[2]{\roman{#1}\kern.5mm\lower.7mm\hbox{\font\=msbm5\#2}\kern.35mm}
\renewcommand\math[1]{\hbox{$\kern0.4mm#1\kern0.4mm$}}
\newcommand\mathss[3]{\hbox{\kern0.17mm\kern.#1mm$#3$\kern.#2mm\kern0.17mm}}
\newcommand\aall[1]{\forall\kern.8mm{#1}\kern.2mm\,;}
\newcommand\eexi[1]{\exists\kern.9mm{#1}\kern.2mm\,;}
\newcommand\impss[2]{\kern.#1mm\kern.17mm\Rightarrow\kern.17mm\kern.#2mm}
\def\afr#1_#2{\mathfrak#1_{\kern.2mm\lower.15mm\hbox{\font\=cmr6\#2}}} 
\def\Alf{\hbox{\font\=cmmi10 scaled\magstep1\\char'013}\kern0.15mm}
\def\Eps{\hbox{\font\=cmmi10 scaled\magstep1\\char'017}\kern0.15mm}
\def\Iota{\kern.15mm\hbox{\font\=cmmi10 scaled\magstep1\\char'023}\kern0.2mm}
\def\Nu{\hbox{\font\=cmmi10 scaled\magstep1\\char'027}\kern0.25mm}
\def\uvarPi{\kern.15mm\underline{\kern-.15mm\varPi\kern-.85mm}\kern.85mm}
\def\uOmega{\kern.3mm\underline{\kern-.3mm\Omega\kern-.3mm}\kern.3mm}
\def\rbrakf{\kern.9mm]\kern.2mm\lower.8mm\hbox{\font\=cmr5\f}\kern.2mm} 
\def\ftimes{\hbox{\kern-.2mm${}\times\kern-2.7mm\lower.9mm\hbox{\font\=cmr5\f}\kern1.7mm$}} 
\def\bbNo{\mathbb N\kern.15mm\lower.65mm\hbox{\font\=cmr6\0}\kern.1mm} 
\def\sbbNo{\mathbb N\kern.07mm\lower.45mm\hbox{\font\=cmr5\0}\kern.1mm} 
\def\bbR{\mathbb R} 
\newcommand\ssbb[3]{\hspace{.#1mm}\mathbb#3\hspace{.#2mm}} 
\def\lbb#1_#2{\mathbb#1\kern.2mm\raise.52mm\hbox{$_{_{#2}}$}} 
\def\rbb#1^#2{\mathbb#1\kern.2mm\lower.55mm\hbox{$^{^{#2}}$}} 
\def\Rlplus{\mathbb R\kern.2mm\lower.33mm\hbox{\font\=cmr5\+}} 
\def\fbbR{\raise1.2mm\hbox{\font\=cmr5\f}\kern.3mm\mathbb R} 
\def\tfbbR{\raise1.2mm\hbox{\font\=cmr5\tf}\kern.3mm\mathbb R} 
\def\stfbbR{\raise.5mm\hbox{\font\=cmr5\t\kern-.1mmf}\kern.3mm\mathbb R} 
\def\tfbbC{\raise1.23mm\hbox{\font\=cmr5\tf}\kern.1mm\mathbb C} 
\def\tfbbH{\raise1.2mm\hbox{\font\=cmr5\tf}\kern.3mm\mathbb H} 
\def\taubb_#1{\tau\kern-.15mm\lower.7mm\hbox{\font\=msbm5\#1}\kern.3mm} 
\def\bartau_bb#1{\bar\tau\kern-.15mm\lower.7mm\hbox{\font\=msbm5\#1}\kern.3mm} 
\def\cinfty{\raise1.35mm\hbox{\font\=cmr5\c}\kern-.15mm\infty} 
\def\inftyyplus{{\vphantom{p_{p_p}}\infty\RHB{.5}{\fiveroman+}}} 
\def\Lebmea^#1{\kern.25mm\mu_{\kern.3mm\hbox{\font\=cmr5\Leb}}^{\vphantom n\kern.5mm{#1}}} 
\newcommand\openIval[1]{\null\kern.35mm]\kern.7mm#1\kern.8mm[\kern.35mm\null} 
\def\setRC{\{\kern0.15mm\raise1.2mm\hbox{\font\=cmr5\tf}\kern.3mm\mathbb R\,,\kern-0.3mm\raise1.23mm\hbox{\font\=cmr5\tf}\kern.1mm\mathbb C\,\}} 
\def\Reit#1{_{\lower.2mm\hbox{\kern.2mm\hskip.#1mm\font\=msbm5\R\kern.15mm\font\=cmr5\t}\kern.15mm}} 
\def\Ceit#1{_{\lower.2mm\hbox{\hskip.#1mm\font\=msbm5\C\kern.15mm\font\=cmr5\t}\kern.15mm}} 
\def\lfbb_#1{_{\lower.25mm\hbox{\kern.3mm\font\=msbm5\#1\kern.15mm}}} 
\def\fRe{\hbox{\font\=cmr9\f\kern.1mm}\roman{Re}\kern.75mm}
\def\fIm{\hbox{\font\=cmr9\f\kern.1mm}\roman{Im}\kern.65mm}
\def\vecc#1{\kern-.5mm\vec{\kern.5mm#1}}
\def\TVS{\roman{TVS}\kern0.4mm}%
\def\LCS{\roman{LCS}\kern0.4mm}%
\def\BaS{\roman{BaS}\kern0.4mm}%
\def\HilbLCS{\roman{H\kern.15mm\lower.7mm\hbox{\font\=cmr5\ilb}\kern.25mmLCS}\kern0.4mm} 
\def\dimHa{{\rm dim_{_{\kern.2mm Ha}}}}
\def\rajou{{}^{}{\Cal B}_{s\,}}
\def\Lis{\Cal L\kern.3mmis\,} 
\def\Linb{\Cal L\lower.7mm\hbox{\kern.1mm\font\=cmmi6\b}}
\def\dualbeta{^{\kern0.4mm\prime}_{\kern-.2mm\raise.95mm\hbox{$_{_\beta}$}}} 
\def\Nbh{\Cal N_{\font\=cmmi6\lower.15mm\hbox{\kern.1mm\bh\kern.15mm}}}
\def\Topma{\roman{{Top_{}}_{\hbox{\font\=cmr6\ma}}\kern.15mm}}
\def\prodsubtext#1{\prod\kern-0.3mm\lower.9mm\hbox{\font\=cmr5\#1}\kern.7mm}
\def\prodc{\prod{_{_{\kern-.3mm\bold c\kern.15mm}}}}
\def\vsprod_#1_#2{\prod\kern-0.3mm{}_{_{\roman{#1}\sp{#2}\,}}} 
\def\vscoprod_#1_#2{\coprod\kern-0.3mm{}_{_{\roman{#1}\sp{#2}\,}}} 
\def\expnota^#1]_#2{\,^{#1\,]{_{}}_{\roman{#2}}}} 
\def\expnote^#1]#2{\kern.8mm^{#1}\kern.3mm\raise1.2mm\hbox{\font\=cmsy5\\char'143}\raise1.1mm\hbox{\font\=cmr5\#2}} %
\def\expar#1]#2{\kern1mm\raise1.5mm\hbox{\font\=cmsy5\\char'042}\kern.6mm#1\kern.8mm\raise1.4mm\hbox{\font\=cmsy5\\char'145}\raise1.3mm\hbox{\font\=cmr5\#2}} %
\def\co{\hbox{\font\=cmmi12\c}\kern.15mm\lower.15mm\hbox{$_{\rm o}$}}
\def\LL^#1{L\kern0.15mm\raise.4mm\hbox{$^{#1}$}\kern0.15mm}
\def\lll^#1{\ell\kern0.7mm\raise.2mm\hbox{$^{#1}$}\kern0.15mm}
\def\Cinfty{C\kern.6mm\raise.15mm\hbox{$^\infty$}\kern.07mm} 
\def\Cinftyzero{C\raise.2mm\hbox{$^{\,\infty}_{\hbox{\font\=cmr5\0}}$}} 
\def\Cper^#1{C^{\,#1}_{\raise.15mm\hbox{\font\=cmr5\per}}} 
\def\Cperinfty{C\raise.2mm\hbox{$^{\kern.55mm\infty}_{\raise.15mm\hbox{\font\=cmr5\per}}$\kern.15mm}} 
\def\Idc{\roman{{Id_{\kern.3mm}}_{c}\,}} 
\def\Catal_#1{{\raise1.3mm\hbox{\font\=cmr5\c}\alpha_{\kern0.3mm}}_{\boldsymbol{\mathcal#1}}\kern.45mm} 
\def\Catom_#1{{\raise1.3mm\hbox{\font\=cmr5\c}\omega_{\kern0.3mm}}_{\boldsymbol{\mathcal#1}}\kern.45mm} 
\def\baral_#1{\bar\alpha\kern.45mm\lower.2mm\hbox{$_{\boldsymbol{\mathcal#1}}\kern.45mm$}} 
\def\barom_#1{\bar\omega\kern.45mm\lower.2mm\hbox{$_{\boldsymbol{\mathcal#1}}\kern.45mm$}} 
\def\catimes{\kern.95mm\raise.45mm\hbox{\font\=cmbsy6\\char'002}\kern-2.2mm\lower.7mm\hbox{\font\=cmr5\c\kern-.15mm a}\kern1.05mm} 
\def\cstimes{\kern.95mm\raise.45mm\hbox{\font\=cmbsy6\\char'002}\kern-2.1mm\lower.7mm\hbox{\font\=cmr5\c\kern-.15mm s}\kern1.2mm} 
\def\opcat{\,\raise1.7mm\hbox{\font\=cmr5\op}} 
\def\bold#1{{\bf#1}}
\def\roman#1{{\rm#1}}
\def\limu_#1{\lim\kern-5.5mm\lower1.5mm\hbox{$_{#1}\ $}}
\def\oseoy{\raise1.9mm\hbox{\kern.5mm\font\=cmr5\o}\kern-1.7mm y}
\def\Univ{\hbox{\font\=cmssbx10\U}{}} 
\def\Pows{\Cal P\kern-.7mm\lower.15mm\hbox{$_s$}\kern.3mm} 
\def\lei{      {}_{ {}^{\,\downarrow\text{\hskip-2.1mm}       }  }  \cap       }
\def\lei{\hbox{\kern.45mm$_{^\downarrow}\kern-1.280mm\cap\kern.85mm$}}
\def\leip{\hbox{\kern.6mm$_{^\downarrow}\kern-1.280mm\cap\kern1mm$}}
\def\leiss#1#2{\hbox{\kern.5mm\kern.#1mm$_{^\downarrow}\kern-1.280mm\cap\kern.9mm\kern.#2mm$}} 
\def\vfsupp{\text{\,-\,\raise1.4mm\hbox{\font\=cmr5\f}supp\kern1mm}} 
\def\inve{\lower.85mm\hbox{$^{^-}$}\kern-.5mm{}^\iota}
\def\fvalue{\hbox{\kern.2mm\font\=cmr10\\char'022\kern-.2mm}} 
\def\ffvalue{\hbox{\kern.2mm\font\=cmr7\\char'022\kern-.2mm}} 
\def\image{\hbox{\font\=cmr10\\char'022\kern-1mm\char'022}} 
\def\iimage{\hbox{\font\=cmr7\\kern.3mm\char'022\kern-.7mm\char'022\kern-.3mm}} 
\def\images{\hbox{\font\=cmr10\\char'022\kern-1mm\char'022\kern-1mm\char'022}} 
\def\weco{\kern.15mm\hbox{\font\=cmtt10\\char'054}\kern.4mm} 
\def\cdotn{\kern-.2mm\cdot\kern-.2mm} 
\def\setminusn{\kern-.2mm\setminus\kern-.2mm} 
\def\capss#1#2{\kern.1mm\kern.#1mm\cap\kern.1mm\kern.#2mm} 
\def\cupss#1#2{\kern.1mm\kern.#1mm\cup\kern.1mm\kern.#2mm} 
\def\cuppp{\kern0.15mm\cup\kern0.15mm} 
\def\timesn{\kern-.2mm\times\kern-.2mm} 
\def\Times{\kern.7mm\hbox{\font\=cmbsy10\\char'002}\kern.7mm}
\def\ttimes{\hbox{\kern-.2mm${}\times\kern-2.5mm\lower.8mm\hbox{\font\=cmr5\t}\kern1.8mm$}} 
\def\ttimesn{\hbox{\kern-.2mm${}\times\kern-2.5mm\lower.8mm\hbox{\font\=cmr5\t}\kern1.4mm$}} 
\def\ktimes{\hbox{\kern-.2mm${}\times\kern-2.5mm\lower1mm\hbox{\font\=cmr5\k}\kern1.5mm$}} 
\def\ltimes{\hbox{${}\times\kern-2.45mm\lower.8mm\hbox{\font\=cmr5\l}\kern1.6mm$}} 
\def\vstimes{\kern.95mm\raise.45mm\hbox{\font\=cmbsy6\\char'002}\kern-2.3mm\lower.9mm\hbox{\font\=cmr5\vs}\kern1.05mm} 
\def\atimes{\kern.8mm\hbox{\font\=cmsy10\\char'002}\kern-1.85mm\lower.65mm\hbox{\font\=cmr5\a}\kern1.4mm} 
\def\risti#1{{}^{}\,{\times}_{{\!}_{{}_{\!\!\!{#1}{}^{\!}\ }}}}
\def\Circ{\kern.9mm\hbox{\font\=cmbsy10\\char'016}\kern.9mm}
\def\cardplus{\hbox{$\kern.77mm+\kern-1.95mm\raise.23mm\hbox{$_{_{\roman c}}$}\kern1.33mm$}}%
\def\ordplus{\hbox{$\kern.78mm+\kern-1.97mm\raise.23mm\hbox{$_{_{\roman o}}$}\kern1.22mm$}}%
\def\ccdot{\hbox{$\kern.77mm\cdot\kern-1mm\raise.45mm\hbox{$_{_{\roman c}}$}\kern.38mm$}} 
\def\svs#1{\sbi{\fiveroman{svs\,}#1}} 
\def\minus{\kern.2mm\lower1.05mm\hbox{$^-$}}
\def\pplus{\raise.22mm\hbox{\font\=cmr5\\char'053}}
\def\mminus{\raise.18mm\hbox{\font\=cmsy5\\char'000}}
\def\plusinftyy{\raise.18mm\hbox{\font\=cmr5\\char'053}\infty}
\def\minusinftyy{\raise.18mm\hbox{\font\=cmsy5\\char'000}\infty}
\def\inftyy{\raise.15mm\hbox{\font\=cmsy7\\char'061}} 
\def\inftyyplus{\raise.15mm\hbox{\font\=cmsy7\\char'061}\raise.65mm\hbox{\font\=cmr5\+}} 
\def\plusinfty{\lower1.05mm\hbox{$^+$}\infty}
\def\minusinfty{\lower1.05mm\hbox{$^-$}\infty}
\def\Qe{\hbox{$Q\kern-2.6mm\raise.2mm\hbox{\font\=cmssqi8\I}\kern1.7mm$}}
\def\Re{{I\!\!R}{}} 
\def\Ce{{\hbox{$C\kern-2.5mm\raise.2mm\hbox{\font\=cmssqi8\I}\kern1.48mm$}}}
\def\imag{\kern.15mm\lower.6mm\hbox{$^{^*}$}\kern-1.8mm\imath\kern.1mm} 
\def\ebit#1{\kern.1mm\hbox{\font\=cmmib8\#1}\kern.2mm} 
\def\ebiF{\kern.1mm\hbox{\font\=cmmib8\F}\kern.5mm} 
\def\ebiT{\kern.1mm\hbox{\font\=cmmib8\T}\kern.6mm} 
\def\ebiU{\kern.1mm\hbox{\font\=cmmib8\U}\kern.5mm} 
\def\bmii#1#2{\hbox{\font\=cmmib#1\#2}}
\def\fssi#1{\hbox{\font\=cmssi10\#1}\kern0.15mm} 
\def\smb#1{\hbox{\font\†=cmmi8\†#1\kern.3mm}} 
\def\eCal#1{\kern.1mm\hbox{\font\†=cmbsy8\†#1\kern.4mm}} 
\def\ecal#1{\kern.1mm\hbox{\font\†=cmsy8\†#1\kern.3mm}} 
\def\ncal#1{\kern.1mm\hbox{\font\†=cmsy9\†#1\kern.3mm}} 
\def\vcal#1{\kern-.1mm\vec{\kern.2mm\hbox{\font\†=cmsy7\†#1}\kern.3mm}} 
\def\id{\kern.3mm\roman{id}\kern.7mm}
\def\idv{\hbox{\font\=cmr10\id}\kern.25mm\lower.7mm\hbox{\font\=cmr6\v}\kern.6mm} 
\def\idm{\hbox{\font\=cmr10\id}\kern.25mm\lower.7mm\hbox{\font\=cmr6\m}\kern.6mm} 
\def\seq#1{\langle#1\rangle}
\def\SemiNor{\Cal S_{_N}\kern0.15mm}
\def\vecs{\upsilon\kern-0.3mm\lower.15mm\hbox{$_s$}\kern0.3mm} 
\def\vecss{\hbox{\font\=cmitt10\v}\kern-0.1mm\lower.15mm\hbox{$_s$}\kern0.2mm} 
\def\nullv{0\lower.7mm\hbox{\font\=cmr6\v}\kern.6mm} 
\def\nullsv{0\lower.7mm\hbox{\font\=cmr6\sv}\kern.6mm} 
\def\Bnull_#1{\hbox{\font\=cmssbx10\0}{_{\kern-0.1mm}}_{#1\kern.15mm}} 
\def\Bzero_#1{\hbox{\font\=cmbx10\0}{_{\kern-0.1mm}}_{#1}} 
\def\bnull#1{\hbox{\font\=cmssbx10\0}{}_{\font\=cmmi6\lower.15mm\hbox{\kern-.1mm\#1\kern.15mm}}} 
\def\bnulla#1_#2{\hbox{\font\=cmssbx10\0}{}_{\font\=cmmi6\lower.15mm\hbox{\#1\kern-.1mm}}\lower.3mm\hbox{$_{_{#2}}$}} 
\def\bzero#1{\hbox{\font\=cmbx10\0}{}_{\font\=cmmi6\lower.15mm\hbox{\kern-.1mm\#1\kern.15mm}}} 
\def\dom{{{}^{}{\rm dom}\,{}_{{}^{}}}}
\def\domm{\kern0.15mm{\rm dom}{^{\kern.3mm\hbox{\font\=cmr6\2}}}\,}
\def\domr#1{\roman{dom}^{\font\=cmr6\raise.0mm\hbox{\kern.3mm\#1}}}
\def\rng{{}^{}{\rm rng}\,{}_{{}^{}}}
\def\CeeFB^#1{C\kern-0.15mm\lower.85mm\hbox{\font\=cmr5\FB}\kern-2.4mm^{#1}\kern.1mm} 
\def\CeePi^#1{C\kern-0.15mm\lower.85mm\hbox{\font\=cmr5\\char'005}\kern-1mm^{#1}\kern.1mm} 
\def\Ccinfty{C\lower.3mm\hbox{$\kern-0.2mm_{\roman c}$}\kern-.85mm\raise.3mm\hbox{$^\infty$}\kern0.15mm} 
\def\CPi#1{C\kern-.2mm\lower.05mm\hbox{$_{_\Pi}$}\kern-1.52mm{}^{#1}}
\def\CinftyPi{C\kern.4mm\raise.3mm\hbox{$^\infty$}\kern-3.35mm_{_\Pi}\kern1.45mm}
\def\CinftyS{\Cinfty\kern-3.9mm_{_{\Cal S}}\kern1.45mm}
\def\RHB#1#2{\raise#1mm\hbox{$#2$}} 
\def\LHB#1#2{\lower#1mm\hbox{$#2$}} 
\def\fiveroman#1{\hbox{\font\=cmr5\#1\kern.1mm}}
\def\sixroman#1{\hbox{\font\=cmr6\#1\kern.1mm}}
\def\eightmath#1{\hbox{\font\=cmmi8\{#1}\kern.1mm}}
\def\eightroman#1{\hbox{\font\=cmr8\{#1}\kern.1mm}}
\def\erm#1{{\font\=cmr8\#1}}
\def\subtext#1{\raise.2mm\hbox{$_{_{\kern0.15mm\roman{#1}}}$}}
\def\subtexT#1{\raise.2mm\hbox{$_{_{\kern0.15mm\hbox{\font\=cmr5\#1}}}$}}
\def\sNor#1{\kern.25mm\lower.38mm\hbox{$_{#1}$}}
\def\sNorr#1{\kern-.2mm\lower.38mm\hbox{$_{#1}$}}
\def\sNoreset_#1{\kern.13mm\lower.83mm\hbox{\font\=cmmi6\C}\kern.32mm\lower.1mm\hbox{$_{^{\emptyset,#1}}$}}
\def\sbi#1{{_{\kern-0.1mm}}_{#1}} 
\def\ar#1{{}_{\font\=cmr6\lower.15mm\hbox{\kern.1mm\#1}}} 
\def\yplus{\lower1mm\hbox{$^{^+}$}} 
\def\yminus{\lower1mm\hbox{$^{^-}$}} 
\def\aminus{{\kern.15mm\raise.3mm\hbox{$_{_-}$}\kern-.1mm}}%
\def\yvee{\LHB{.9}{^{^{\,\vee}}}\kern-.3mm} 
\def\ywed{\LHB{.9}{^{^{\,\wedge}}}\kern-.3mm} 
\def\adot{\kern.2mm\hbox{\font\=cmb10\\char'056}}%
\def\ydot{\kern.2mm\raise1.9mm\hbox{\font\=cmb10\\char'056}}
\def\yydot{\kern.2mm\raise1.35mm\hbox{\font\=cmb7\\char'056}\kern.2mm}
\def\yydott{\kern.2mm\raise1.35mm\hbox{\font\=cmb6\\char'056}\kern.2mm}
\def\ClT{{\rm Cl}\kern.25mm\lower.4mm\hbox{$_{\Cal T}$}\kern0.2mm} 
\def\IntT{\sp{\rm Int}\kern.2mm\lower.4mm\hbox{$_{\Cal T}$}\kern0.2mm} 
\def\Cl_taurd#1{\roman{Cl_{}}_{\kern0.37mm\hbox{\font\=cmmi8\\char'034}\kern-0.15mm{_{}}_{\roman{rd}}\kern0.2mm#1\,}}
\def\Int_taurd#1{\roman{Int_{}}_{\kern0.37mm\hbox{\font\=cmmi8\\char'034}\kern-0.15mm{_{}}_{\roman{rd}}\kern0.2mm#1\,}}
\def\inc{\subseteq}
\def\iinc{\supseteq}
\def\exi#1{\exists\,#1\kern.2mm\,;}
\def\all#1{\forall\,#1\kern.2mm\,;}
\def\imply{\Rightarrow}
\def\equivv{\Leftrightarrow}
\def\spp{\kern0.07mm} 
\def\sp{\kern0.15mm} 
\def\ssp{\kern0.37mm} 
\def\snn{\kern-0.2mm} 
\def\sn{\kern-0.3mm} 
\def\ssn{\kern-0.63mm} 
\def\biggerlineskip#1 {\linebreak\nopagebreak\vskip-4.2mm\vskip.#1mm\nopagebreak\noindent}%
\def\Biggerlineskip#1 {\linebreak\nopagebreak\vskip-4.2mm\vskip#1pt\nopagebreak\noindent}%
\def\nKP#1{$\null$\kern#1mm}
\def\KP#1{\kern#1mm} 
\def\KN#1{\kern-#1mm} 
\def\nhskip#1mm{$\null$\kern#1mm}
\def\mhyppy#1{\null\kern#1mm}
\def\text#1{\hbox{\rm#1}}
\def\NS{\vskip1.7mm}
\def\VBOX/#1/#2/HEREend{\vbox{#2\vskip-#1mm}\vfill\null\eject}
\def\œ$#1${\hbox{$#1$}} 
\def\"{\"a} \def\"{\"o}
\def\q#1{``\kern0.37mm#1\kern0.37mm"}
\def\noin{\noindent}
\def\Newline{\kern-10mm\newline}
\def\eps{\varepsilon}
\def\leu{\raise1.5mm\hbox{\font\=cmmi5\\char'074}\kern.2mm}%
\def\riu{\kern.2mm\raise1.5mm\hbox{\font\=cmmi5\\char'076}}%
\def\Symbol#1Ï{\kern.35mm\hbox{\font\=cmr10\\char'047}\kern.2mm#1\kern.35mm\hbox{\font\=cmr10\\char'047}}%
\def\Symboo#1Ï{\kern.35mm\text{`}\kern.2mm#1\kern.35mm\hbox{\font\=cmr10\\char'047}}
\def\newskline#1 \par{\newline$\null$\kern#1mm}%
\def\vinskip#1#2 \par{\vskip#1mm$\null$\kern-6mm\hspace{#2mm}}
\newenvironment{myLeftskip}[3]{\leftskip#1mm\parindent0mm\addtolength{\parskip}{#2mm}\addtolength{\baselineskip}{#3mm}\def\newfline \par{\newline$\null\hfill$}}{\par}
\def\binsubsubhead#1#2\par{\vskip4mm{\bf#1.}\hskip5mm{\font\=cmss10\#2}\nopagebreak\vskip2mm\nopagebreak\noindent}%
\def\sigrd{\sigma\kern-.2mm\lower.7mm\hbox{\font\=cmr6\r\font\=cmr5\d}\kern.6mm}
\def\ssigrd{\sigma\kern-.2mm\lower.7mm\hbox{\font\=cmr6\r\font\=cmr5\d}\kern-1.7mm\raise1.25mm\hbox{\font\=cmr6\2}\kern1mm}
\def\sssigrd{\sigma\kern-.2mm\lower.7mm\hbox{\font\=cmr6\r\font\=cmr5\d}\kern-1.7mm\raise1.25mm\hbox{\font\=cmr6\3}\kern1mm}
\def\sigrdu^#1{\sigma\kern-.2mm\lower.7mm\hbox{\font\=cmr6\r\font\=cmr5\d}\kern-1.7mm\raise1.25mm\hbox{\font\=cmr6\#1}\kern1mm}
\def\taurd{\tau\kern-.4mm\lower.7mm\hbox{\font\=cmr6\r\font\=cmr5\d}\kern.6mm}
\def\tsigrd{\tau\sigma\kern-.2mm\lower.7mm\hbox{\font\=cmr6\r\font\=cmr5\d}\kern.6mm}
\def\tauRe{\tau{_{\kern-0.6mm}}_{\hbox{\font\=cmmi5\I\!\!R}}} 
\def\bartauRe{\bar\tau{_{\kern-0.6mm}}_{\hbox{\font\=cmmi5\I\!\!R}}} 
\def\tauR#1{\tau_{_{I\!\!R}}\kern-1.5mm^{#1}}
\def\RN{I\!\!R\kern.3mm^{\hbox{\font\=cmmi6\N}}} 
\def\QTN{Q\kern.1mm_{\lower.2mm\hbox{\font\=cmmi6\T}}^{\kern.2mm\hbox{\font\=cmmi6\N}}} 
\def\lleLCS(#1)-{\hbox{\font\=cmsy10\\char'024}\kern.3mm\lower.62mm\hbox{\font\=cmr5\LCS}\kern.4mm(\kern0.15mm#1\kern0.37mm)\text{\,-\kern0.15mm}} 
\def\leLCSr{\hbox{\font\=cmsy10\\char'024}\kern.3mm\lower.62mm\hbox{\font\=cmr5\LCS}\kern.4mm}
\def\leLCS-{{\le}{}_{_{{\rm LCS}}}\text{\sp-\sp}}
\def\tvpreceq{\kern1.4mm\raise1.7mm\hbox{\font\=cmr5\v}\kern-1.2mm\lower.25mm\hbox{\font\=cmr5\t}\kern-1.4mm\hbox{\font\=cmsy10\\char'026}\kern1mm} 
\def\Centerline#1\par#2\par#3{\noindent#1\phantom{#3}\hfill#2\hfill\phantom{#1}#3}
\def\vbDelta_#1{\text{\kern.4mm-\,}\bar\Delta{_{\kern.3mm}}_{#1}\kern.6mm}
\def\CalDBGN^#1{\Cal D_{\hbox{\font\=cmr5\BGN}}^{\kern.6mm{#1}}\kern.15mm}
\def\sacap{\hbox{${}\kern.15mm\sqcap\kern-2.6mm\raise.3mm\hbox{\font\=cmr5\a}\kern1.3mm{}$}} 
\def\sdcap{\hbox{${}\kern.15mm\sqcap\kern-2.6mm\raise.3mm\hbox{\font\=cmr5\d}\kern1.3mm{}$}} 
\def\LipFKa^#1{\mathcal L\kern.15mm ip_{\,\hbox{\font\=cmr5\FK\kern.15mm a}}^{\kern.75mm#1}}
\def\LLip_#1^#2{\mathcal L\kern.15mm ip_{\kern.8mm\vphantom{\hbox{\font\=cmr5\I}}\hbox{\font\=msbm5\#1\,}}^{\vphantom l\kern.9mm#2\kern.3mm}}%
\def\LipFK_#1^#2{\mathcal L\kern.15mm ip_{\,\hbox{\font\=cmr5\FK\kern.15mm#1}}^{\kern.8mm#2}}%
\def\CalCFK0{\mathcal C\kern.2mm\lower.7mm\hbox{\font\=cmr5\FK\kern.3mm\font\=cmr6\0}{}}
\def\avarFK{\raise1.7mm\hbox{\font\=cmr5\a}\kern-.2mm\hbox{\font\=cmtex11\\char'012}\subtext{FK}\kern.4mm} 
\def\abThetaFK{\raise1.9mm\hbox{\font\=cmr5\a}\kern.7mm\bar{\kern-.9mm\hbox{\font\=cmssi10\\char'002}}\kern.4mm\subtext{FK\,}} 
\def\abDeltaFK{\raise1.7mm\hbox{\font\=cmr5\a\!}\bar\Delta\kern.4mm\subtext{FK\,}}
\def\svs#1{\sbi{\fiveroman{svs\,}#1}}
\def\AVS{\roman{AVS}\kern.4mm}
\def\DVS{\roman{DVS}\kern.4mm}
\def\strVS{\roman{strVS}\kern.4mm}
\def\Con_#1#2{\roman{Con}\sp\subtext{#1\sp#2}}
\def\ConFKd{\roman{Con}\sp\subtext{FK\sp d}}
\def\ConFKa{\roman{Con}\sp\subtext{FK\sp a}}
\def\ConFKt{\roman{Con}\sp\subtext{FK\sp t}}
\def\ConKM{\roman{Con}\sp\subtext{KM}}
\def\tauMac{\tau\kern.15mm\lower.7mm\hbox{\font\=cmr5\Mac}\kern.6mm}
\def\sbii#1{{_{\kern-0.1mm}}_{\hbox{\font\=cmmi5\#1}}}
\def\deltaAV{\delta{_{\kern-0.1mm}}_{\hbox{\font\=cmr6\av}}} 
\def\tauDV{\tau{_{\kern-0.4mm}}_{\hbox{\font\=cmr5\d\font\=cmr6\v}}} 
\def\tauTV{\tau{_{\kern-0.4mm}}_{\hbox{\font\=cmr6\tv}}} 
\begin{document}

\title[$\text{\sc FK differentiabilities as BGN}$]%
      {The Fr\"licher\,--\,Kriegl differentiabilities as a particular\vskip1mm
         case of the Bertram\,--\,Gl\"ckner\,--\,Neeb construction}

\author[S. Hiltunen]{Seppo\ I\. Hiltunen}
\address{Helsinki University of Technology                             \vskip0mm$\hspace{2mm}$
           Institute of Mathematics, U311                              \vskip0mm$\hspace{2mm}$
           P.O.\ Box 1100                                              \vskip0mm$\hspace{2mm}$
           FIN-02015 HUT\vskip0mm
         FINLAND}
\email{shiltune\,@\,cc.hut.fi}

\subjclass[2000]{Primary 46T20, 46G05; Secondary 58C25, 58C20, 46A17}

\keywords{Differentiability, Fr\"licher\,--\,Kriegl theory, BGN\,--\,theory,
arc\sp-\sp generat- ed\ssp/\sp determined vector space, Lipschitz map,
dualized vector space, locally convex space.}

\begin{abstract}

We prove that the order $k$ differentiability classes for \œ$k=0.\ssp,1.\ssp,
\ldots\,\infty$ in the \q{arc-generated} interpretation of the Lipschitz
theory of differentiation by Fr\"licher and Kriegl can be obtained as
particular cases of the general construction by Bertram, Gl\"ckner and Neeb
leading to $C^{\ssp k}$ differentiabilities from a given $C^{\,0}$ concept.

  \end{abstract}

\maketitle


\noin In \cite[p.\ 252]{BGN}\ssp, it was already announced (without proof\,)
that the general construction in \cite{BGN} gives as particular cases the
Lipschitz differentiabilities of order $\ssp k$ in \cite{FK} for maps \math{
f:E\iinc U\to F} when the spaces \math{E\ssp,F} are equipped with the $\,
\roman c\,^{\infty\,}$--\,extensions of the locally convex topologies. Our
purpose in this note is to prove this result, formulated as
Theorem \ref{main Thm} below. This complements our treatment in
\cite{SeBGN}\ssp. For most of the notations and of the preliminaries, we refer
to \cite[pp.\ 4\,--\,9]{SeBGN} which we assume the reader to be acquainted
with. However, we introduce the following slight change in notation.

Below, the standard topological field of real numbers is \math{ \tfbbR =
(\sp\fbbR\sp\,,\taubb_R)} with underlying set \math{\bbR} instead of the
earlier \math{\bosy R=(\ssp\bold R\sp\,,\tauRe)} over \math{\Re\ssp}. We also
let \œ$\ssp\rbb R^+=$ $\bbR\capss20\{\,t:t>0\,\}\,$, and \math{\bbNo=\infty}
is the set of natural numbers. Further, we define \vskip.5mm $\nKP{18}
[\,\sp f\sp,\sp g\rbrakf = \{\ssp(\sp x\,;y\ssp,z\sp) : (\sp x\ssp,y\sp)\in f\ssp$
                           and $\ssp(\sp x\ssp,z\sp)\in g\,\} \nKP{19} $ and \vskip.3mm $\nKP{20.8}
f\ftimes g = \{\ssp(\sp x\ssp,u\,;y\ssp,v\sp) : (\sp x\ssp,y\sp)\in f\ssp$
                           and $\ssp(\sp u\ssp,v\sp)\in g\,\}$\vskip.5mm

\noin which earlier were written \q{\sp$[\,\sp f\sp,\sp g\ssp\,]$\sp} and \q{\snn$
  f\risti2 g$\ssp}, respectively. \vskip.3mm

In general definitions, to fix matters precisely, we utilize the notational
convention for linear combinations sketched in \cite[p.\ 5]{SeBGN} according
to which for example we have \math{ (\sp t\, x + s\, y\sp)\svs E =
 \ssigrd E\ssp\fvalue(\sp\tsigrd E\ssp\fvalue(\sp t\ssp, x\sp)\ssp,
  \tsigrd E\ssp\fvalue(\sp s\ssp, y\sp)) } which more shortly and generally
ambiguously is denoted by \q{\ssp\œ$ t\, x + s\, y $\ssp} when \math{E} is
a real structured vector space and we have \math{s\ssp,t\in\bbR} and \math{
x\ssp,y\in\vecs E\sp}. Likewise, for example instead of the precise \math{
[\,\sp\{\sp x\sp\}+\{\ssp\delta\ssp\}\sp\,B\ssp\,]\svs E} one conventionally
writes \q{\ssp\œ$x+\delta\,B$\ssp}. However, in passages of informal
discussion or in proofs where the surrounding spaces have been fixed, we may
use the shorter imprecise notations.

By a {\it structure changer\ssp} meaning any function \œ$\ssp
\sigma\inc(\sp\Univ\sp^{\times 2.})^{\times 2.} =
\Univ\times\Univ\times\snn(\sp\Univ\times\Univ\sp)$ with \œ$\ssp
\roman{pr}\ar 1\snn\circ\sigma\inc\roman{pr}\ar 1\ssp$,
the {\it interpretation\ssp} of a class $\ssp\Cal C\sp$ of $\,\bold K\,
$--\,vector maps by a struct- ure changer $\sp\sigma\sp$ satisfying also \œ$\ssp
\domm\Cal C\sp\cup\sp\rng\snn\dom\Cal C\inc\dom\sigma\sp$ we understand the \linebreak
class $\,\sigma\ftimes\sigma\ftimes\id\sn\image\Cal C =
\{\ssp(\sp\sigma\spp\fvalue E\ssp,\sigma\spp\fvalue F\sp,f\ssp) :
             (E\ssp,F\sp,f\ssp)\in\Cal C\sp\,\}\,$. In \cite{FK} structure
changers frequently appear as object components of functors.      \vskip.3mm

We next suitably reformulate the facts from \cite{FK} which we need below.


  \binsubsubhead A{The convenient spaces of Fr\"licher and Kriegl}

Let \math{ \strVS(\sp\fbbR\ssp)
  = \{\ssp(X\sp,S\ssp):X\ssp$ real vector space$\sp\,\}\sp}, the class of all
structured real vector spaces. Its subclass \math{\LCS(\sp\tfbbR\ssp) } has
only a minor role below. The subclasses $\roman{AVS}\ssp$ and \math{\roman{DVS}}
will be more important here. They are obtained as follows.

Put \math{\DVS=\strVS(\sp\fbbR\ssp)\capss10\{\,E:(E\ssp)\subtext D\ssp\}}
where \math{(E\ssp)\subtext D} means that \math{\taurd E\not=\emptyset} is a
point separating linearly closed set of linear maps \math{\sigrd E\to\fbbR\ssp
}. Calling a topology $\Cal T\sp$ {\it arc\,-\ssp generated\,} if{}f \math{ U
\in \Cal T} for every \math{U\inc\bigcup\ssp\Cal T} with \math{ c\inve\image U
\in \taubb_R} for all continuous \œ$c:\taubb_R\to\Cal T\ssp$ having \math{
\dom c = \bbR\ssp}, we define \math{ \AVS =
     \strVS(\sp\fbbR\ssp)\capss10\{\,E:(E\ssp)\subtext A\ssp\}} where $
(E\ssp)\subtext A\sp$ means that \math{\taurd E} is an arc\,-\ssp generated
Hausdorff topology for \math{\vecs E} such that for \math{ \Cal P =
(\sp\taubb_R\sp,\taurd E\ssp)} we have \math{
(\ssp\Cal P\spp,\ssigrd E\circ[\,\sp c\ar 1\sp,c\ar 2\sn\rbrakf)} and \math{
(\ssp\Cal P\spp,\tsigrd E\circ[\,\sp c\,,c\ar 1\sn\rbrakf)} topological maps
when \math{(\sp\taubb_R\sp,\taubb_R\sp,c\,)} and \math{
           (\ssp\Cal P\spp,c\sp\sbi\iota)} are global topological maps for $
  \sbi{\iota\ssp=\ssp\sixroman{1\sp,\ssp 2}\,}$.

The reader may compare the preceding definitions to \cite[Definition 2.1.1,
Remark, p.\ 28, Definitions 2.3.8, 2.3.9, p.\ 45]{FK}\ssp. In particular, note
that we make all the spaces \q{separated} right from the beginning, as opposed
to \cite{FK}\ssp. By a simple verification \math{E\in\AVS} when \math{ E \in
\TVS(\sp\tfbbR\ssp)} with \math{\taurd E} a metrizable topology. \vskip.4mm

We say that \math{\Cal T} is the {\it smooth $\ssp S\,$--\,topology\ssp} if{}f
there is \math{\Omega} with \math{\emptyset\not=S\inc\ssbb02 R\,^\Omega} and \vskip.3mm $\nKP{7}
\Cal T = \{\,\sp U\sn:U\inc\Omega\ssp$ and $\,
              \all{\sp c\sp}\,c\in\Omega\,^{\ssbb21 R}$ and \vskip.15mm $\nKP{21}
[\ \all{\varphi\in S}\,\varphi\circ c\in\vecs\Cinfty(\ssbb22 R)\ ]\ssp
    \imply\ssp c\inve\image U\in\taubb_R\ssp\}\,$. \hfill Then putting \vskip.3mm $\mhyppy{5.3}
\deltaAV = \seq{\,(\sp\sigrd E\ssp,\spp\Cal L\,(\spp E\ssp,\sn\tfbbR\ssp)) :
                             E \in \AVS\,}\KP{42.3}$ and \vskip.3mm \centerline{$
\tauDV = \seq{\,(\sp\sigrd E\ssp,\Cal T\,):E\in\DVS\ssp$ and $\,
          \Cal T\sp$ is the smooth $\ssp\taurd E\,$--\,topology$\ssp\,}\,$,} \vskip.3mm

\noin we get the structure changers \œ$\,\tauDV:\DVS\to\AVS\,$ and \œ$\,
\deltaAV:\AVS\to\strVS(\sp\tfbbR\ssp)\,$. Here $\,
\tauDV=\tau\subtext M\ssp|\,\sp\DVS\sp$ when $\ssp\tau\sn\subtext M$ is the
object component of the functor \vskip.3mm \centerline{$
\bosy\tau\sn\subtext M:\sp\underline{\sn\roman{DVS}\sn}\ssp\subtext{there}\to
                       \sp\underline{\sn\roman{ArcVS}\sn}\ssp\subtext{there}\,
$ in \,\cite[Definition 2.3.14, p.\ 46]{FK}\ssp.} \vskip.3mm

\noin Taking for example \math{E=L^{\ssp\frac 12}\ssp(\ssp[\,0\,,1\,]\ssp)\sp
}, we have \math{ \taurd\spp(\sp\deltaAV\KN1\fvalue E\ssp) =
\{\,\vecs E\times\snn\{0\}\sp\}\sp}, hence \linebreak $ \deltaAV\KN1\fvalue E
\not\in\DVS\ssp$, and consequently \math{\rng\deltaAV\not\inc\DVS}.   \vskip.3mm

Now, the class of {\it dualized Fr\"licher\,--\,Kriegl convenient\ssp} spaces
is \vskip.3mm \centerline{$
\ConFKd = \DVS\capss01\{\,E:\Cal L\,(\sp\tauDV\KN1\fvalue E\ssp,\sn\tfbbR\ssp)
               \inc\taurd E\ssp$ and $\ssp(E\ssp)\subtext C\ssp\}$}\vskip.3mm

\noin where \math{(E\ssp)\subtext C} means that for every \math{ c \in
(\sp\vecs E\ssp)\,^{\ssbb10 R}} there is some \math{ c\ar 1 \in
(\sp\vecs E\ssp)\,^{\ssbb10 R}} such that if \math{ \ell\circ c \in \vecs
\Cinfty(\ssbb22 R)} holds for all \math{\ell\in\taurd E\sp}, then we also have
\œ$\,\ell\circ c\ar 1=(\ssp\ell\circ c\ssp)\ssp'$ for \math{\ell\in\taurd E\sp
}. The reader may compare this to
\cite[Definitions 2.4.2, 2.5.3, 2.6.3, pp.\ 48, 53, 57]{FK}\ssp. Putting \œ$\,
\ConFKa = \tauDV\KN1\image\ConFKd\,$, then $\ssp\ConFKa$ is the {\it arc\,-\ssp
generated counterpart\ssp} of the class of Fr\"licher\,--\,Kriegl convenient
spaces, and essentially from \cite[Theorems 2.4.3\ssp(vi)\ssp, 2.5.2\ssp,
  pp.\ 49, 53]{FK} we obtain the following

\begin{lemma}\label{FKd eqv FKa}

$\hfill \tauDV\ssp|\,\ConFKd$ is bijective $\,\ConFKd\to\ConFKa \hfill$ \vskip.5mm\nKP{26.05}
and $\ (\sp\tauDV\ssp|\,\ConFKd)\inve = \sp\deltaAV\ssp|\,\ConFKa\,$.

  \end{lemma}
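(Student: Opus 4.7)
Since $\ConFKa=\tauDV\KN1\image\ConFKd$ holds by definition, $\tauDV\ssp|\,\ConFKd$ is automatically surjective onto $\ConFKa$, and the entire lemma reduces to the single identity
\begin{equation*}
  \deltaAV\fvalue(\tauDV\fvalue E)\;=\;E
  \qquad\text{for every }\ssp E\in\ConFKd\ssp.
\end{equation*}
Indeed, granted this, for any $F\in\ConFKa$ we may pick $E\in\ConFKd$ with $F=\tauDV\fvalue E$; then $\deltaAV\fvalue F=E\in\ConFKd$ and $\tauDV\fvalue(\deltaAV\fvalue F)=\tauDV\fvalue E=F$, which both certifies that $\deltaAV\ssp|\,\ConFKa$ lands in $\ConFKd$ and that it is a two\sp-\sp sided inverse of $\tauDV\ssp|\,\ConFKd$.

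To establish the identity, fix $E\in\ConFKd$ and compare components. Neither $\tauDV$ nor $\deltaAV$ modifies the first component $\sigrd E$, so only the second components have to be matched. By the very definitions of the two structure changers, the second component of $\deltaAV\fvalue(\tauDV\fvalue E)$ is $\Cal L\,(\sp\tauDV\fvalue E\ssp,\sn\tfbbR\ssp)$, the set of linear maps $\sigrd E\to\fbbR$ continuous for the smooth $\taurd E$\sp-\sp topology. The inclusion $\Cal L\,(\sp\tauDV\fvalue E\ssp,\sn\tfbbR\ssp)\inc\taurd E$ is one of the conditions written into the definition of $\ConFKd$ and hence free. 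For the reverse inclusion, fix $\ell\in\taurd E$: the $\DVS$\sp-\sp hypothesis makes $\ell$ linear $\sigrd E\to\fbbR$; and given any $c\in(\sp\vecs E\ssp)\,^{\ssbb10 R}$ with $\varphi\circ c\in\vecs\Cinfty(\ssbb22 R)$ for all $\varphi\in\taurd E$, specializing to $\varphi=\ell$ yields $\ell\circ c\in\vecs\Cinfty(\ssbb22 R)$, and in particular $(\ell\circ c)\inve\image V\in\taubb_R$ for every $V\in\taubb_R$; this says exactly that $\ell\inve\image V$ belongs to the smooth $\taurd E$\sp-\sp topology, whence $\ell\in\Cal L\,(\sp\tauDV\fvalue E\ssp,\sn\tfbbR\ssp)$.

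The main potential obstacle is strictly bookkeeping: one has to unwind carefully the tuple\sp-\sp format conventions of \cite{SeBGN} in order to confirm that $E$ and $\deltaAV\fvalue(\tauDV\fvalue E)$ really agree on their first components, so that the single equality $\Cal L\,(\sp\tauDV\fvalue E\ssp,\sn\tfbbR\ssp)=\taurd E$ just established closes the argument. The completeness condition $(\sp E\sp)\subtext C$ appearing in the definition of $\ConFKd$ is not invoked in the present equivalence; it is needed only in the subsequent differentiability analysis, and the equivalence itself is exactly the content extracted from the cited \cite[Theorems 2.4.3\,(vi), 2.5.2]{FK}.
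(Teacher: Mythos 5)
Your proof is correct, but it takes a genuinely different route from the paper, which offers no argument at all for this lemma: it simply defers to \cite[Theorems 2.4.3\ssp(vi), 2.5.2]{FK}, where the statement is extracted from the adjunction between dualized and arc\sp-\sp generated structures developed there. You instead unwind the definitions directly, and the structural observation that makes this work is a good one: since $\ConFKa$ is \emph{defined} as $\tauDV\KN1\image\ConFKd$, the entire lemma collapses to the single identity $\deltaAV\KN1\fvalue(\sp\tauDV\KN1\fvalue E\sp)=E$ on $\ConFKd$, and since the first components are untouched this is just $\Cal L\,(\sp\tauDV\KN1\fvalue E\ssp,\sn\tfbbR\ssp)=\taurd E$, of which one inclusion is written verbatim into the definition of $\ConFKd$ and the other is automatic because each $\ell\in\taurd E$ belongs to the generating set $S=\taurd E$ of the smooth topology, so that $c\inve\image(\sp\ell\inve\image V\sp)=(\sp\ell\circ c\sp)\inve\image V$ is open for every admissible curve $c$. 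What this buys is a self\sp-\sp contained proof and the explicit information that the completeness condition $(E\ssp)\subtext C$ is irrelevant here; what the paper's citation buys is brevity and consistency with its general policy of outsourcing the convenient\sp-\sp space groundwork to \cite{FK}. The only point you leave tacit is that $\tauDV$ carries $\DVS$ into $\AVS=\dom\deltaAV$, so that the composite $\deltaAV\KN1\fvalue(\sp\tauDV\KN1\fvalue E\sp)$ is defined at all and $\deltaAV\ssp|\,\ConFKa$ has domain all of $\ConFKa$; this is asserted by the paper when it introduces the two structure changers, so within its framework your argument is complete.
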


Letting \math{\ConKM=\{\,E:(E\ssp)\subtext{con\ssp KM}\ssp\}\sp}, where \math{
(E\ssp)\subtext{con\ssp KM}} means that \math{E\in\LCS(\sp\tfbbR\ssp)} and is
locally\sp/\,Mackey complete in the sense \cite[p.\ 196]{Jr} or
\cite[Lemma 2.2, p.\ 15]{KM}\ssp, then \math{\ConKM} is the class of spaces
which are \q{convenient} in the sense \cite[The- orem 2.14, p.\ 20]{KM}\ssp.
The class \math{\ConFKt=\{\,E:(E\ssp)\subtext{con\ssp KM}$ and $\ssp E\sp$ is
bornological$\ssp\,\} } is the {\it locally convex counterpart\ssp} of the
class of Fr\"licher\,--\,Kriegl convenient spaces.

For \math{E\in\LCS(\sp\tfbbR\ssp)\sp}, let \math{ \tauMac E = \{\,\sp U\sn : U
$ mopen in $\sp E\sp\,\}} where \math{U} being {\it mopen\ssp} in \linebreak $
E\ssp$ means that \math{U\inc\vecs E} and that for \math{x\in U} and \math{ B
\in\rajou E} there is some \œ$\ssp\delta\in\rbb R^+$ with \œ$\,
[\,\sp\{\sp x\sp\}+\{\ssp\delta\ssp\}\sp\,B\ssp\,]\svs E\inc U\spp$. Then \math{
\tauMac E} is the {\it Mackey closure\ssp} topology of the {\it bornological\ssp}
vector space \math{(\sp\sigrd E\ssp,\rajou E\ssp)} which in
\cite[Definition 2.12, p.\ 19]{KM} is also called the $\,\roman c\,^{\infty\,}${\it
--\,topology\ssp} of the {\it locally convex\ssp} space \math{E\sp}.

Considering a fixed \math{E\in\Con_{FK}d\sp}, when one a bit loosely speaks of
the \q{Mackey closure topology} in various places in \cite{FK} without
explicitly specifying the bornological vector space, which there would be
denoted by \q{\œ$\sp\sigma\ar b\ssp E\ssp$}, note that by
\cite[Proposition 2.3.7, p.\ 44]{FK} one then refers precily to the topology \math{
\taurd(\sp\tauDV\KN1\fvalue E\ssp)\sp}. This should be kept in mind when we
below refer to results in \cite{FK} in order to get some shortening of
  presentation.

\NS To summarize, if we put \math{ \alpha\sp\subtext{FK\sp t} =
\seq{\,(\sp\sigrd E\ssp,\tauMac E\ssp) : E \in \Con_{FK}t\ssp} } and \œ$\,
\delta\subtext{FK\sp a} = $ $ \deltaAV\ssp|\,\ConFKa\,$, \,we have \vspace{-.5mm}\par$\KP{44.5}
\alpha\sp\subtext{FK\sp t}$ \vspace{-1.5mm}\par\centerline{$
\AVS\supset\Con_{FK}a
                      \,\longleftarrow\KN{1.8}\frac{\KP{5}}{}\ \,
           \Con_{FK}t\subset\Con_{KM}{}\subset\LCS(\sp\tfbbR\ssp) $} \vspace{1mm}\par$\KP{34}
                      \raise1.7mm\hbox{\font\=cmsy12\\char'152}\kern-1.6mm
                       \lower2mm\hbox{\font\=cmsy12\\char'043}\
 \delta\subtext{FK\sp a} $ \vspace{1mm}\par$\KP{16.7}
\DVS\supset\Con_{FK}d$ \vskip1.5mm

\noin where \math{\alpha\sp\subtext{FK\sp t}} and \math{\delta\subtext{FK\sp a}}
are bijective. This diagram is here given only for the purpose of clarifying
the relations between the various classes of structured vector spaces. Below,
we only need to consider the bijection given by the vertical arrow.
                                        
\begin{definitions}[\ssp product structures\sp]\label{def prods} $\null$ \vskip.7mm

$X\vstimes Y=\bigcap\ssp\{\ssp(\sp a\ssp,c\sp):\exi{a\ar 1\ssp,\sp a\ar 2\ssp,
  c\sp\ar 1\ssp,\sp c\sp\ar 2\in\Univ}\, X = (\sp a\ar 1\sp,c\sp\ar 1)$ and $\,
    Y = (\sp a\ar 2\ssp,c\sp\ar 2)$ \vskip.3mm \nKP{14.5} and $\,
a = \{\ssp(\sp x\ssp,y\,;u\ssp,v\,;z\ssp,w\sp):(\sp x\ssp,u\ssp,z\sp) \in
      a\ar 1$ and $(\sp y\ssp,v\ssp,w\sp)\in a\ar 2\ssp\}$ \vskip.3mm \nKP{14.5} and \ \ \ $\,
c = \{\ssp(\sp t\,;x\ssp,y\,;u\ssp,v\sp):(\sp t\ssp,x\ssp,u\sp)\in c\sp\ar 1$
    and $(\sp t\ssp,y\ssp,v\sp)\in c\sp\ar 2\ssp\}\sp\}\,$, \vskip.5mm

$\Cal T\ttimes\Cal U = \{\,\bigcup\ssp\Cal A : \Cal A\inc \{\,U\times V :
                           U\in\Cal T$ and $\ssp V\in\Cal U\sp\,\}\sp\}\,$, \vskip.5mm

$E\sp\sqcap F = ((\sp\sigrd E\ssp)\vstimes(\sp\sigrd F\ssp)\ssp,
                 (\sp\taurd E\ssp)\ttimes(\sp\taurd F\ssp))\,$, \vskip.5mm

$E\sdcap F = \bigcap\ssp\{\ssp(X\sp,S\ssp) : X =
   (\sp\sigrd E\ssp)\vstimes(\sp\sigrd F\ssp)$ and $\sp
   S = \{\,\ell:\exists\,\ell\ar 1\in\taurd E\ssp,$ \vskip.3mm $\null\hfill
   \ell\ar 2\in\taurd F\,;\,
   \ell = \{\ssp(\sp x\ssp,y\ssp,r+s\sp):(\sp x\ssp,r\sp)\in\ell\ar 1$ and $
   (\sp y\ssp,s\sp)\in\ell\ar 2\ssp\}\sp\}\sp\}\,$, \KP4 \vskip.5mm

$E\sacap F=\tauDV\KN1\fvalue((\sp\deltaAV\KN1\fvalue E\ssp)\sdcap
                             (\sp\deltaAV\KN1\fvalue F\ssp))\,$.  \vskip.7mm

\noin We may call $\sp X\vstimes Y\sp$ the {\it vector space\ssp} (or module) product of
$\sp X$ and $\ssp Y\sp$. The class $\ssp\Cal T\ttimes\Cal U\ssp$ is the
Tihonov topological product of $\ssp\Cal T$ and $\,\Cal U\ssp$.

  \end{definitions}

By \cite[Proposition 3.3.1, p.\ 65]{FK}\ssp, when \math{E\ssp,F\in\ConFKd\sp},
we have that \œ$\ssp E\sdcap F $ \linebreak is precisely the product space
which in \cite{FK} is denoted by \q{\snn\œ$E\ \RHB{.55}{_\Pi}\sp\,F\sp$}. In
particular, recalling Lemma \ref{FKd eqv FKa} above, we have that \math{
E\spp,F \in \Con_{FK}d\impss22 E\sdcap F\in\Con_{FK}d } and that \math{
E\spp,F \in \Con_{FK}a\impss22 E\sacap F\in\Con_{FK}a \sp}. By
\cite[Remark 3.3.4, p.\ 67]{FK}\ssp, we have \œ$ E\sacap F = E\sp\sqcap F\ssp$
whenever the spaces \math{ E\ssp,F \in \ConFKa } are such that at least one of
them is finite\ssp-\sp dimensional.


  \binsubsubhead B{The Lipschitz differentiable maps}

We refer to (\ref{defi Lip_R^kE}) and (\ref{defi LipFKd^k}) and
             (\ref{defi LipFKa^k}) of Constructions \ref{defi Lip ...} below.
In \cite[p.\ 83]{FK}\ssp, it is agreed that \math{f:E\iinc U\to F} is \math{
\LLip_{}^k} if{}f \math{E\ssp,F\in\ConFKd} and \math{ U \in
\taurd\spp(\sp\tauDV\KN1\fvalue E\ssp)} and \œ$f\in(\sp\vecs F\ssp)\,^U\sp$
with \math{f\snn\circ\spp c\in\LLip_R^kF} for \math{c\in\LLip_R^kE} having \œ$\ssp
\rng\sp c\inc U\spp$. Assuming that \œ$\ssp k\in\infty\ssp\yplus\sp$, and also
taking \cite[Definition 1.4.1, Proposition 2.3.7, Lemma 4.3.1,
             pp.\ 22, 44, 99]{FK} into account, it is seen that \math{
f:E\iinc U\to F} is \math{\LLip_{}^k} if and only if \œ$
(\spp E\ssp,\spp F\sp,\sp f\ssp)\in\LipFK_d^k\ssp$ with \œ$\ssp\dom\sn f=U\spp
$. Hence, for \math{k\in\infty\ssp\yplus} we may say that $\ssp\LipFK_d^k$ is
the class of maps which are Lipschitz differentiable of order \math{k} in the
sense of Fr\"licher and Kriegl, and that \math{\LipFK_a^k} is its
   \q{arc\ssp-\sp generated} interpretation.

\begin{constructions}[\sp classes of Lipschitz functions and maps\sp]\label{defi Lip ...}

For all \math{E\ssp,\sp k}, with the restriction \math{E\in\DVS} in
(\ref{defi L4}) and (\ref{defi Lip_R^kE}) below, we let

\begin{enumerate}\begin{myLeftskip}{-3}{.2}{.4}

\item \ $\LLip_{}^{}\ssn =
            \{\,\chi:\eexi{\Omega\in\taubb_R}\,\chi \in \ssbb01 R\,^\Omega\ssp$
            and $\,\aall{t\ar 0\in\Omega}\,\eexi{\delta\in\rbb R^+}$ \newfline

      $\aall{s\ssp,t\in\Omega}\,|\,s-t\ar 0\ssp|+|\,\sp t-t\ar 0\ssp|<\delta
      \impss22 |\,\sp\chi\fvalue s-\chi\fvalue t\ssp\,| \le
      \delta^{\sp\mminus 1\,}|\,s-t\sp\,|\sp\,\}\,$, \KP4 \label{defi L1}

\item \ $\LLip_{}^k = \LLip_{}^{}\ssn\cap\sp\big\{\,\chi:k\in\infty\ssp\yplus\sp$
         and $\,\eexi{\Omega\,,\bosy\chi}\,(\sp\emptyset\,,\sp\chi\sp) \in
          \bosy\chi \in\big (\ssbb30 R\,^\Omega\sp\big)\,^{k\ssp+\sp 1.}$ \newfline

        and $\ssp\aall{i\in k}\,\chi\fvalue i\ssp\yplus =
        (\sp\chi\fvalue i\ssp)\ssp'\in\LLip_{}^{}\big\}\,$, \KP4 \label{defi L2}

\item \ $\LLip_R^k = \LLip_{}^k\snn\cap\ssp\bbR\,^{\ssbb19 R}$, \label{defi L3}

\item \ $\LLip_{}^kE = \{\,c : c \in (\sp\vecs E\ssp)\,^{\roman{dom}\,\spp
         \eightmath c}\sp$ and $\,\all{\ell\in\taurd E}\,
          \ell\sp\circ\sp c\in\LLip_{}^k\sp\}\,$, \label{defi L4}

\item \ $\LLip_R^kE=\LLip_{}^kE\capss22(\sp\vecs E\ssp)\,^{\ssbb19 R}$, \label{defi Lip_R^kE}

\item \ $\LipFK_d^k = \ConFKd\KN{.7}^{\times 2.}\snn\times\Univ \capss20
          \{\ssp(\spp E\ssp,\spp F\sp,\sp f\ssp) : k\in\infty\ssp\yplus\sp$
            and $\ssp f\in(\sp\vecs F\ssp)\,^{\roman{dom}\,f}$ \newfline

       and $\ssp\dom\sn f\inc\vecs E\ssp$ and $\,\aall{c\in\LLip_{}^kE}\,
       f\snn\circ\sp c\in\LLip_{}^kF\sp\,\}\,$, \KP4 \label{defi LipFKd^k}

\item \ $\LipFK_a^k=\sp\tauDV\ftimes\tauDV\ftimes\id\image\LipFK_d^k\KP1$. \label{defi LipFKa^k}

  \end{myLeftskip}\end{enumerate}\end{constructions}

\begin{lemma}\label{Lip^0 in EF}

Let $\,E\ssp,F\in\Con_{FK}d\ssp${\rm, }and with $\,G=E\sdcap F\sp${\rm, }also
let $\,c\in(\sp\vecs G\ssp)\,^{\roman{dom}\,\spp\eightmath c}\sp$. Then $\,
c\in\LLip_{}^\emptyset G$ if and only if $\,
  \roman{pr}\ar 1\snn\circ\spp c\in\LLip_{}^\emptyset E$ and $\,
  \roman{pr}\ar 2\circ\spp c\in\LLip_{}^\emptyset F\sp$.         \end{lemma}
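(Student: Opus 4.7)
The plan is to translate the condition defining $\LLip_{}^\emptyset G$ into scalar statements by unfolding the definition of $\taurd G$ for $G=E\sdcap F$ from Definition \ref{def prods}. From that definition, every $\ell\in\taurd G$ has the form $\ell\ssp\fvalue(x,y)=\ell\ar 1\fvalue x+\ell\ar 2\fvalue y$ for some $\ell\ar 1\in\taurd E$ and $\ell\ar 2\in\taurd F$. Writing $c\ar 1=\roman{pr}\ar 1\sn\circ\spp c$ and $c\ar 2=\roman{pr}\ar 2\sn\circ\spp c$, this yields $\ell\circ c=(\ell\ar 1\circ c\ar 1)+(\ell\ar 2\circ c\ar 2)$ as a pointwise sum of real\,-\,valued functions on the open set $\dom c\inc\bbR$.

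For necessity, given any $\ell\ar 1\in\taurd E$, I pair it with the zero functional $0\in\taurd F$, which is present since $\taurd F$ is nonempty and linearly closed. The resulting $\ell\in\taurd G$ satisfies $\ell\circ c=\ell\ar 1\circ c\ar 1\sp$, whence the hypothesis $c\in\LLip_{}^\emptyset G$ yields $\ell\ar 1\circ c\ar 1\in\LLip_{}^\emptyset\sp$; as $\ell\ar 1$ was arbitrary in $\taurd E$, this establishes $c\ar 1\in\LLip_{}^\emptyset E$, with the other case fully symmetric.

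For sufficiency, take any $\ell\in\taurd G$, write it in the displayed form, and observe that $\ell\circ c$ is the pointwise sum of two members of $\LLip_{}^\emptyset$. It therefore suffices to verify that $\LLip_{}^\emptyset$ is closed under such sums on a common open domain. Given $\chi\ar 1,\chi\ar 2\in\LLip_{}^\emptyset$ with common domain $\Omega$ and $t\ar 0\in\Omega$, I apply Construction \ref{defi Lip ...}(\ref{defi L1}) to obtain $\delta\ar 1,\delta\ar 2\in\rbb R^+$ witnessing the local Lipschitz property at $t\ar 0$, and then the triangle inequality shows that $\delta=\min\{\,\delta\ar 1,\delta\ar 2,\delta\ar 1\spp\delta\ar 2/(\delta\ar 1+\delta\ar 2)\,\}$ witnesses it for $\chi\ar 1+\chi\ar 2$.

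The only step requiring genuine unfolding is the description of an arbitrary $\ell\in\taurd G$ above, which must be extracted from the intersection\,-\,and-quantifier form of $E\sdcap F$ in Definition \ref{def prods}; once that decomposition is in hand, the rest of the argument is elementary local Lipschitz arithmetic, and no appeal to the $\ConFKd$ axioms beyond the defining properties of $\taurd F$ being nonempty and linearly closed is actually needed.
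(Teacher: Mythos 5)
Your proposal is correct and follows essentially the same route as the paper's own proof: decompose an arbitrary $\ell\in\taurd\sp G$ as a sum $\ell\ar 1\snn\circ\roman{pr}\ar 1+\,\ell\ar 2\circ\roman{pr}\ar 2\sp$, use the zero functional on the complementary factor for necessity, and use closure of $\LLip_{}^{}$ under pointwise sums for sufficiency. The only difference is that you make the choice of $\delta$ for the sum explicit, whereas the paper simply notes $\{\,u+v:u\ssp,v\in\LLip_{}^{}\sn\}\inc\LLip_{}^{}$ without computation.
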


\begin{proof} Put \math{c\ar 1=\roman{pr}\ar 1\snn\circ\spp c} and \math{
c\ar 2 = \roman{pr}\ar 2\circ\spp c\ssp}. First letting \œ$\ssp c \in
\LLip_{}^\emptyset G\sp$, to have \œ$c\ar 1\in\LLip_{}^\emptyset E\,$, for
arbitrarily fixed \math{\ell\ar 1\in\taurd E} and \math{ \varphi\ar 1 =
\ell\ar 1\snn\circ\sp c\ar 1}, it suffices that \linebreak \œ$\varphi\ar 1 \in
\LLip_{}^\emptyset=\LLip_{}^{}\ssn$. Letting \math{ \ell\ar 2 =
\vecs F\times\snn\{0\} } and \math{ \ell =
\sigrd\ssn\fbbR\circ(\ssp\ell\ar 1\ftimes\ssp\ell\ar 2)} and \œ$\ssp \varphi =$ \linebreak
\œ$\ell\sp\circ\sp c\,$, as \math{\taurd F} is (easily seen to be) a vector
subspace in \œ$\ssp\fbbR\expnota^\upsilon_s\sp F]_{vs}\sp$, we have \œ$\ssp
\ell\ar 2 \in $ \linebreak $\taurd F\sp$, hence $\ssp \ell \in \taurd G\sp$,
whence by \math{c \in \LLip_{}^\emptyset G} further $\ssp\varphi\in\LLip_{}^{}\ssn
$. For all \math{t} having \vskip.3mm \centerline{$
\varphi\ar 1\KN1\fvalue t = \ell\ar 1\snn\circ\sp c\ar 1\ssn\fvalue\sp t + 0 =
\sigrd\ssn\fbbR\circ(\ssp\ell\ar 1\ftimes\ssp\ell\ar 2)\circ
  [\,\sp c\ar 1\sp,\sp c\ar 2\sn\rbrakf\ssn\fvalue\sp t =
\ell\sp\circ\sp c\sp\fvalue\sp t = \varphi\fvalue\sp t\ssp$,} \vskip.3mm

\noin we get $\ssp\varphi\ar 1=\varphi\in\LLip_{}^{}\ssn$, as we wished.
      Similarly, one obtains $\ssp c\ar 2\in\LLip_{}^\emptyset F\sp$.

Conversely, letting \math{c\ar 1\in\LLip_{}^\emptyset E} and \œ$\, c\ar 2 \in
\LLip_{}^\emptyset F\sp$, in order to get \œ$\ssp c \in \LLip_{}^\emptyset G\sp
$, for arbitrarily fixed \math{\ell\ar 1\in\taurd E} and \œ$\, \ell\ar 2 \in
\taurd F\sp$, and for \math{ \ell =
\sigrd\ssn\fbbR\circ(\ssp\ell\ar 1\ftimes\ssp\ell\ar 2)} and \œ$ \varphi =
\ell\sp\circ\sp c\,$, it suffices that \œ$\ssp \varphi \in \LLip_{}^{}\ssn$.
Putting \math{\varphi\sbi\iota=\ell\sbi\iota\snn\circ\sp c\sp\sbi\iota}, then
\œ$\ssp\varphi\ar 1\ssp,\sp\varphi\ar 2\in\LLip_{}^{}\ssn$, and for all \math{
t} we have \vskip.5mm \centerline{$
\varphi\fvalue t = \ell\sp\circ\sp c\sp\fvalue\sp t =
\sigrd\ssn\fbbR\circ(\ssp\ell\ar 1\ftimes\ssp\ell\ar 2)\circ
       [\,\sp c\ar 1\sp,\sp c\ar 2\sn\rbrakf\ssn\fvalue\sp t =
\sigrd\ssn\fbbR\ssp\fvalue(\ssp\ell\ar 1\snn\circ\sp c\ar 1\ssn\fvalue\sp t\,,
                           \ssp\ell\ar 2\circ\sp c\ar 2\ssn\fvalue\sp t\ssp)$} \vskip.3mm $\KP{9.25}
= \sigrd\ssn\fbbR\ssp\fvalue(\sp\varphi\ar 1\KN1\fvalue t\ssp,
                                \varphi\ar 2\KN1\fvalue t\ssp)
= \varphi\ar 1\KN1\fvalue t+(\sp\varphi\ar 2\KN1\fvalue t\ssp)
= (\sp\varphi\ar 1\snn+\varphi\ar 2)\fvalue t\ssp$, \vskip.5mm

\noin and hence \math{\varphi=\varphi\ar 1\snn+\varphi\ar 2\sp}. Noting that \œ$\ssp
\{\,u+v:u\ssp,v\in\LLip_{}^{}\sn\}\inc\LLip_{}^{}\ssn$, we immediately obtain
$\ssp\varphi\in\LLip_{}^{}\ssn$, \,as it was required.           \end{proof}

\begin{corollary}\label{[f,g] is Lip^0}

For all $\,E\ssp,\spp F\sp,\spp G\sp,\sp f\sp,\sp g\,${\rm, }it holds that \vskip.3mm\centerline{$
(\spp E\ssp,\spp F\sp,\sp f\ssp)\ssp,(\spp E\ssp,\spp G\sp,\sp g\sp) \in
  \LipFKa^\emptyset \impss22 (\spp E\ssp,\spp F\sacap G\sp,\sp
    [\,\sp f\sp,\sp g\rbrakf)\in\LipFKa^\emptyset\,$.}       \end{corollary}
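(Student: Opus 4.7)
The plan is to reduce the statement about $\LipFKa^\emptyset$ to the corresponding statement about $\LipFKd^\emptyset$ via Lemma \ref{FKd eqv FKa} and the definition of $\sacap\sp$, and then apply Lemma \ref{Lip^0 in EF} to finish. First I will unpack the definition $\LipFKa^\emptyset = \tauDV \ftimes \tauDV \ftimes \id\sn\image\LipFKd^\emptyset$. From the hypothesis and the bijectivity of $\tauDV$ on $\ConFKd$ (Lemma \ref{FKd eqv FKa}), there exist unique $E\ar 1\ssp,\sp F\ar 1\ssp,\sp G\ar 1\in\ConFKd$ with $\tauDV\fvalue E\ar 1=E\sp$, $\tauDV\fvalue F\ar 1=F\sp$, $\tauDV\fvalue G\ar 1=G\sp$, and such that $(E\ar 1\ssp,\sp F\ar 1\sp,\sp f\ssp)$ and $(E\ar 1\ssp,\sp G\ar 1\sp,\sp g\ssp)$ both lie in $\LipFKd^\emptyset$. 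Next, since $F\ar 1\ssp,\sp G\ar 1\in\ConFKd\spp$, the paper already records that $F\ar 1\sdcap G\ar 1\in\ConFKd$ and that $F\sacap G=\tauDV\fvalue(\sp F\ar 1\sdcap G\ar 1\sp)\sp$, which will allow me to transfer any conclusion back into $\LipFKa^\emptyset$.

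It remains to show the dualized triple $(\sp E\ar 1\spp,\sp F\ar 1\sdcap G\ar 1\sp,\sp[\,\sp f\sp,\sp g\rbrakf\sp)$ belongs to $\LipFKd^\emptyset\sp$. The set-theoretic conditions are immediate: $\dom[\,\sp f\sp,\sp g\rbrakf=\dom\sn f\cap\dom\sn g\inc\vecs E\ar 1\sp$, and $[\,\sp f\sp,\sp g\rbrakf$ takes values in $\vecs F\ar 1\times\vecs G\ar 1=\vecs(F\ar 1\sdcap G\ar 1)\sp$. Given any $c\in\LLip_{}^\emptyset E\ar 1\sp$, the two hypotheses give $f\snn\circ\sp c\in\LLip_{}^\emptyset F\ar 1$ and $g\snn\circ\sp c\in\LLip_{}^\emptyset G\ar 1\sp$. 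A direct inspection of the definition $[\,\sp f\sp,\sp g\rbrakf = \{\ssp(\sp x\,;y\ssp,z\sp) : (\sp x\ssp,y\sp)\in f\ssp$ and $\ssp(\sp x\ssp,z\sp)\in g\,\}\sp$ yields the identity $[\,\sp f\sp,\sp g\rbrakf\snn\circ\sp c = [\,\sp f\snn\circ\sp c\,,\sp g\snn\circ\sp c\,\rbrakf\sp$. Lemma \ref{Lip^0 in EF}, applied with $E\rightsquigarrow F\ar 1\ssp$, $F\rightsquigarrow G\ar 1\ssp$, $G\rightsquigarrow F\ar 1\sdcap G\ar 1\sp$, then delivers $[\,\sp f\sp,\sp g\rbrakf\snn\circ\sp c\in\LLip_{}^\emptyset(\sp F\ar 1\sdcap G\ar 1\sp)\sp$, which is exactly the condition needed for membership in $\LipFKd^\emptyset\sp$.

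Finally, applying $\tauDV\ftimes\tauDV\ftimes\id$ to the triple just produced and invoking the identification $F\sacap G=\tauDV\fvalue(\sp F\ar 1\sdcap G\ar 1\sp)$ concludes the proof. I do not expect any real obstacle: the reduction through $\tauDV$ is automatic from Lemma \ref{FKd eqv FKa} and the definition of $\sacap\sp$, and the substantive content has already been isolated as Lemma \ref{Lip^0 in EF}. The only care needed is the bookkeeping that the unique preimages $E\ar 1\ssp,\sp F\ar 1\ssp,\sp G\ar 1$ are genuine elements of $\ConFKd$ with the correct first-coordinate coincidence forced by the injectivity of $\tauDV\sp$.
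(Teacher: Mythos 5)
Your proposal is correct and follows essentially the same route as the paper's own proof: pull the statement back through $\tauDV$ to the dualized classes, observe that $[\,\sp f\sp,\sp g\rbrakf\snn\circ\sp c=[\KP1 f\snn\circ\sp c\,,\sp g\spp\circ\sp c\rbrakf\sp$, and invoke Lemma \ref{Lip^0 in EF} componentwise. The only (shared, harmless) elision is that the projections of $[\KP1 f\snn\circ\sp c\,,\sp g\spp\circ\sp c\rbrakf$ are restrictions of $f\snn\circ\sp c$ and $g\spp\circ\sp c$ to the common domain, which the paper likewise treats as immediately still $\LLip_{}^\emptyset$.
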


\begin{proof} In view of Lemma \ref{FKd eqv FKa}\sp,
Definitions \ref{def prods} and (\ref{defi LipFKa^k}) and (\ref{defi LipFKd^k})
of Constructions \ref{defi Lip ...}\ssp, for arbitrarily given \math{
(\spp E\ssp,\spp F\sp,\sp f\ssp)\ssp,(\spp E\ssp,\spp G\sp,\sp g\sp) \in
  \LipFK_d^\emptyset\sp}, putting \math{ H=F\sdcap G} and \œ$\ssp h = $ $
[\,\sp f\sp,\sp g\rbrakf\,$, for an arbitrarily fixed \math{ c \in
\LLip_{}^\emptyset E} we must show that \œ$\ssp h \circ c \in
\LLip_{}^\emptyset H\sp$. Further putting \math{c\ar 1= f\snn\circ\sp c} and \math{
c\ar 2 = g\sp\circ\sp c\sp}, by definition we have \math{ c\ar 1 \in
\LLip_{}^\emptyset F } and \œ$c\ar 2 \in\LLip_{}^\emptyset G \sp$. By
Lemma \ref{Lip^0 in EF} for \math{ h\circ c\in\LLip_{}^\emptyset H } it
suffices that \œ$\, \roman{pr}\ar 1\snn\circ h\spp\circ\spp c \in
\LLip_{}^\emptyset F$ and \math{ \roman{pr}\ar 2\circ h\spp\circ\spp c \in
\LLip_{}^\emptyset G } hold. This indeed is the case since we have \œ$\,
\roman{pr}\ar 1\snn\circ h\spp\circ\spp c = $ \œ$
\roman{pr}\ar 1\snn\circ[\,\sp f\sp,\sp g\rbrakf\circ\sp c =
\roman{pr}\ar 1\snn\circ[\KP1 f\snn\circ\sp c\,,\sp g\spp\circ\sp c\rbrakf =
\roman{pr}\ar 1\snn\circ[\KP1 c\ar 1\ssp,\sp c\ar 2\rbrakf =
c\ar 1\ssp|\,\dom c\ar 2 \,$, and similarly also \math{ \roman{pr}\ar 2\circ h
\spp\circ\spp c = c\ar 2\ssp|\,\dom c\ar 1}.                     \end{proof}

\begin{proposition}\label{Lip0 is BGN}

\hfill          $\LipFKa^\emptyset$ is a \,\erm{BGN}\,--\,class on $\,
\ConFKa$ over $\ssp\tfbbR\sp\,$.      \hfill\null          \end{proposition}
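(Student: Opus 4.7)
The plan is to verify, one by one, the defining conditions of a \erm{BGN}-class as formulated in \cite{SeBGN}. Via the bijection of Lemma \ref{FKd eqv FKa} together with (\ref{defi LipFKa^k}) of Constructions \ref{defi Lip ...}, the class $\LipFKa^\emptyset$ on $\ConFKa$ corresponds to $\LipFK_d^\emptyset$ on $\ConFKd$; the remark following Definitions \ref{def prods} further shows that the structure changer $\tauDV$ carries $\sdcap$-products to $\sacap$-products. Hence it is equivalent, and more convenient in view of the curve-based characterisation in (\ref{defi L4}), to verify the required conditions for $\LipFK_d^\emptyset$ on $\ConFKd$ and then transport the result back.

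The elementary axioms would be checked directly from (\ref{defi LipFKd^k}) and (\ref{defi L4}). Identity maps on open subdomains, constant maps, and restrictions of given maps to smaller open subdomains of their source, all lie in the class trivially. Closure under composition, the one non-trivial structural axiom, is immediate from the curve characterisation: for $(E,F,f),(F,G,g) \in \LipFK_d^\emptyset$ and $c \in \LLip_{}^\emptyset E$ with range in $\dom(g \circ f)$, first the defining property of $f$ yields $f \circ c \in \LLip_{}^\emptyset F$ with range in $\dom g$, and then the defining property of $g$ yields $g \circ f \circ c \in \LLip_{}^\emptyset G$. Continuous linear maps between $\ConFKd$-spaces also belong to the class, because postcomposing any such map with $\ell \in \taurd F$ produces a continuous linear functional on $\tauDV\fvalue E$, which lies in $\taurd E$ by the containment $\Cal L(\tauDV\fvalue E, \tfbbR) \subseteq \taurd E$ built into the definition of $\ConFKd$.

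The remaining axiom --- that $\sdcap$ (and hence $\sacap$) is a product in the \erm{BGN}-sense --- is where the substantive work lies, but its core has already been accomplished: closure under the pairing operation $[\,f,g\rbrakf$ is precisely Corollary \ref{[f,g] is Lip^0}, based on the characterisation Lemma \ref{Lip^0 in EF}. Combined with the projections $\roman{pr}\ar 1$ and $\roman{pr}\ar 2$ of $F \sdcap G$ --- which are continuous linear and therefore in $\LipFK_d^\emptyset$ by the previous paragraph --- the universal property follows formally from the composition axiom already verified. Thus the only step I would expect to require real work, closure under pairings, has been cleared in advance by the two preceding lemmas, and what remains in the actual proof is essentially the bookkeeping necessary to match the precise axiom list of \cite{SeBGN}.
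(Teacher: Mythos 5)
Your treatment of the product axiom does coincide with the paper's: the proof there likewise notes that every member of $\LipFKa^\emptyset$ is continuous with open domain, takes $G=E\sacap F$, and obtains closure under the pairing $[\,\sp f\sp,\sp g\rbrakf$ from Corollary \ref{[f,g] is Lip^0} via Lemma \ref{Lip^0 in EF}. The gap lies in your closing claim that the pairing is the only step requiring real work. The axiom list $(1)\subtext{BGN},\ldots,(6)\subtext{BGN}$ of \cite[Definitions 4, p.\ 7]{SeBGN} contains three items that your proposal never mentions, and they occupy most of the paper's actual proof.

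First, $(4)\subtext{BGN}$ requires the inversion map $\Iota=\seq{\,t^{-1}:t\in\bbR\setminus\{0\}\,}$ to satisfy $(\sp\tfbbR\sp,\tfbbR\sp,\Iota\ssp)\in\LipFKa^\emptyset$; this rests on the local estimate $|\,\Iota\fvalue s-\Iota\fvalue t\,|\le 2\,t^{-2\,}|\,s-t\,|$ for $|\,s-t\,|<\frac12\,|\,t\,|\not=0$ and is not a formal consequence of anything you verified. Second, $(5)\subtext{BGN}$ is a determination axiom: two morphisms into $F$ defined on a common neighbourhood of $0$ in $\bbR$ and agreeing off $0$ must agree at $0$; the paper derives this from the point-separating property of $\taurd\sp(\sp\deltaAV\KN1\fvalue F\ssp)$ together with the continuity of real Lipschitz functions. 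Without it the difference-quotient maps of the BGN construction are not uniquely determined, so it cannot be omitted. Third, $(6)\subtext{BGN}$ demands that the vector space operations of every $E\in\ConFKa$ be morphisms; scalar multiplication $\tfbbR\sacap E\to E$ is bilinear, not linear, hence is not covered by your continuous-linear-maps step, and the paper must verify it by hand, decomposing a test curve $\Gamma$ into components $\gamma$ and $c$ via Lemma \ref{Lip^0 in EF} and using that the product of two Lipschitz real functions is again Lipschitz. Until these three axioms are supplied, the proposal does not establish the proposition.
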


\begin{proof} We first note that \math{\tfbbR\in\ConFKa}, and that by
\cite[Corollary 4.1.7, p.\ 85]{FK} every \math{\tilde f\in\LipFKa^\emptyset}
is continuous, trivially having open domain. Hence, for $\ssp\LipFKa^\emptyset$
to be a productive class on \math{\ConFKa} over \math{\tfbbR} in the sense of
\cite[Definitions 4, p.\ 7]{SeBGN}\ssp, for arbitrarily given \math{E\ssp,F\in
\ConFKa} there must be some \math{G\in\ConFKa} with \œ$\ssp
\sigrd G=$ \œ$(\sp\sigrd E\ssp)\vstimes(\sp\sigrd F\ssp)\ssp$ and \math{
(\spp G\sp,E\ssp,\roman{pr}\ar 1\ssp|\,\vecs G\ssp)\ssp,
(\spp G\sp,F\sp ,\roman{pr}\ar 2\ssp|\,\vecs G\ssp)\in\LipFKa^\emptyset}
and such that \linebreak \œ$
(\spp H\sp,E\ssp,f\ssp)\ssp,(\spp H\sp,F\sp,\sp g\ssp)\in\LipFKa^\emptyset\sp
\imply\ssp(\spp H\sp,G\sp,[\,\sp f\sp,\spp g\rbrakf)\in\LipFKa^\emptyset\ssp$
  for all \œ$\ssp f\sp,g\ssp,H\sp$. In view of Corollary \ref{[f,g] is Lip^0}
    above, we may take $\ssp G=E\sacap F\sp$.

In order to establish (1)\œ$\ssp,\ldots\,$(6) of
           \sp\cite[Definitions 4\ssp, p.\ 7]{SeBGN}\ssp, below referred to \linebreak
by \œ$(1)\subtext{BGN}\ssp,\ldots\,(6)\subtext{BGN}\,$, we note the following.
We get (1)$\subtext{BGN}$ from \cite[Proposition 4.3.2, p.\ 99]{FK}\ssp, and
(2)$\subtext{BGN}$ and (3)$\subtext{BGN}$ follow directly from
(\ref{defi LipFKd^k}) and (\ref{defi LipFKa^k}) of
  Constructions \ref{defi Lip ...} above. For (4)$\subtext{BGN}$ with \math{
\Iota = \seq{\,\sp t^{\sp\mminus 1}\sn:t\in\bbR\setminus\snn\{0\}\,}\sp}, note
that we \biggerlineskip2 have \math{|\,\Iota\fvalue s-\Iota\fvalue t\ssp\,|\le
 2\sp\,t^{\sp\mminus 2\,}|\,s-t\,| } when \math{ s\ssp,t\in\bbR } with \math{
|\,s-t\sp\,|<\frac12\,|\ssp t\ssp|\not=0\sp}. Using this, an elementary appeal
to Constructions \ref{defi Lip ...} with details left to the reader gives \œ$
(\sp\tfbbR\sp\,,\sn\tfbbR\sp\,,\sp\Iota\ssp)\in\LipFKa^\emptyset \, $. For
(5)$\subtext{BGN}$\ssp, letting \math{f\sp,\sp g \in
\LipFKa^\emptyset\!\image\snn\{\ssp(\sp\tfbbR\sp\,,F\ssp)\ssp\} } hold with \linebreak
\œ$0\in\dom\sn f=\dom g\ssp$ and \math{ f\sp\fvalue t=g\fvalue t } for \math{
t\not=0\sp}, we should have \math{f=g\sp}. In order to \linebreak get this
indirectly, supposing that \math{f\not=g\sp}, we have \math{f\sp\fvalue 0\not=
g\fvalue 0\sp}, and since \linebreak $\taurd\snn(\sp\deltaAV\KN1\fvalue F\ssp)\ssp$
is point separating, there is \math{ \ell \in
 \taurd\snn(\sp\deltaAV\KN1\fvalue F\ssp) } such that with \œ$\ssp \varphi =
\ell\sp\circ\snn f$ and \math{ \vartheta=\ell\sp\circ\spp g } we have \math{
\varphi\fvalue 0\not=\vartheta\sp\fvalue 0\sp}. Since now \œ$\ssp \varphi\ssp,
\vartheta\in\LLip_{}^{}\ssn$, there is \math{\delta\in\rbb R^+ } with \œ$
\varphi\fvalue s\not=\vartheta\sp\fvalue s \ssp$ for all \math{s} with \math{
\minus\delta<s<\delta }. However, for these \math{ s\not=0 } we also have $
\varphi\fvalue s=\ell\sp\circ\snn f\sp\fvalue s=\ell\sp\circ\spp g\fvalue s =
 \vartheta\sp\fvalue s\ssp$, \,a contradiction.

To get (6)$\subtext{BGN}\ssp$, we have to establish \math{ \tilde a \ssp ,
\widetilde m\in\LipFKa^\emptyset} for an arbitrarily fixed \œ$\ssp E \in$ $
\Con_{FK}a\ssp$ and \math{ \tilde a = (\spp E\sacap E\ssp,\spp E\ssp,
\ssigrd E\ssp)} and \math{\widetilde m=(\spp G\sp,\spp E\ssp,m\sp)} where \math{
G=\tfbbR\sacap E} and \œ$m=\tsigrd E\,$. We explicitly show that \math{
\widetilde m\in\LipFKa^\emptyset\sp}, leaving the similar proof of \œ$\tilde a
\in\LipFKa^\emptyset\ssp$ as an exercise to the reader. Indeed, given any \math{
\Gamma\in\LLip_{}^\emptyset(\sp\deltaAV\KN1\fvalue G\ssp)\sp}, we must get \math{
m\circ\Gamma\in\LLip_{}^\emptyset(\sp\deltaAV\KN1\fvalue E\ssp)\sp}, which in
turn follows if for arbitrarily fixed \œ$\,\ell\in$ $
\taurd(\sp\deltaAV\KN1\fvalue E\ssp)\ssp$ we show (m) that \œ$\,
\ell\sp\circ\spp m\circ\Gamma \in \LLip_{}^{}\ssn$. Putting \math{ \gamma =
\roman{pr}\ar 1\snn\circ\spp\Gamma} and \œ$\, c =
\roman{pr}\ar 2\circ\spp\Gamma\sp$, by Lemma \ref{Lip^0 in EF} above we have \œ$\,
\gamma \in \LLip_{}^\emptyset(\sp\deltaAV\KN1\fvalue\sp\tfbbR\ssp) = \LLip_{}^{}\ssn
$ and \math{ c \in \LLip_{}^\emptyset(\sp\deltaAV\KN1\fvalue E\ssp)\sp}, and
hence $\,\ell\sp\circ\sp c\in\LLip_{}^{}\ssn$. Now, for all \math{t} we have \vskip.3mm $\nKP{2.4}
   \ell\circ m\circ\Gamma\sp\fvalue t
 = \ell\circ m\circ[\,\sp\gamma\ssp,\sp c\rbrakf\!\fvalue t
 = \ell\ssp\fvalue(\sp m\fvalue(\sp\gamma\fvalue t\ssp,\sp c\fvalue t\sp)))$ \vskip.3mm $\nKP{20}
 = \gamma\fvalue t\cdot(\ssp\ell\ssp\fvalue(\sp c\fvalue t\sp))
 = \gamma\fvalue t\cdot(\ssp\ell\sp\circ\sp c\sp\fvalue\sp t\sp)
 = \gamma\cdot(\ssp\ell\sp\circ\sp c\,)\fvalue\spp t\ssp$, \nKP{4.6} whence \vskip.3mm

\noin $\ell\circ m\circ\Gamma=\gamma\cdot(\ssp\ell\sp\circ\sp c\,)\,$. Using $\,
\{\,u\cdotn v:u\ssp,v\in\LLip_{}^{}\sn\}\inc\LLip_{}^{}\ssn$, we get (m)
  above.                                                         \end{proof}

\begin{definitions}\label{defi var, etc.}

For all classes $\sp\tilde f\sp$ we let \vskip.5mm

$\abDeltaFK\tilde f=\LipFKa^\emptyset\vbDelta_\stfbbR\tilde f$ \hfill
     (\sp see \cite[Definitions 7, p.\ 8]{SeBGN}\ssp)\,, \KP{1.3} \vskip.5mm

$\avarFK\tilde f\KP1 = \bigcap\ssp\{\ssp(E\sacap E\ssp,\sp F\sp,\sp g\ssp):
\exi{f\sp,\sp U}\,\tilde f =
(E\ssp,F\sp,f\ssp)\in\ConFKa\ssn^{\times 2.}\sn\times\Univ$ and \vskip.2mm $\mhyppy{22}
f\in(\sp\vecs F\ssp)\,^U$ and $\ssp U\in\taurd E$ and $\ssp g =
\{\ssp(\sp x\ssp,u\ssp,y\sp):$ \vskip.2mm $\mhyppy{25}
\all{\eps\in\rbb R^+,\ssp\ell\in\taurd\sp(\sp\deltaAV\KN1\fvalue F\ssp)}\,
\exi{\delta\in\rbb R^+}\,\all{t\in\bbR}\,$ \par $\null\hfill
|\ssp t\ssp|<\delta\imply
|\,\sp\ell\ssp\fvalue(\sp f\sp\fvalue(\sp x+t\,u\sp)\svs E-
f\sp\fvalue x-t\,y\sp)\svs F\ssp|\le|\ssp t\ssp|\,\eps\,\}\sp\}\,$, \KP{1.3} \vskip.5mm

$\abThetaFK\tilde f=\bigcap\ssp\{\,\tilde g:\exi{E\ssp,F\sp,f\sp,\sp g}\,
\all h\,
\tilde f=(E\ssp,F\sp,f\ssp)\in\ConFKa\ssn^{\times 2.}\sn\times\Univ\ssp$ and \vskip.2mm

$\mhyppy{11}
f\in F^{\sp/\sp E}\sp$ and $\ssp\tilde g =
(E\sacap E\sp\sqcap(\sp\tfbbR\sp\sqcap\tfbbR\ssp)\ssp,\sp F\sp,\sp g\ssp) \in
\LipFKa^\emptyset\sp$ and $\ssp[\ h=$ \vskip.2mm

$\mhyppy{4}
\{\ssp(\sp x\ssp,u\,;s\ssp,t\,;\sp y\ssp):
f\sp\fvalue(\sp x+s\,u\sp)\svs E=(\sp
f\sp\fvalue(\sp x+t\,u\sp)\svs E + (\sp s-t\sp)\,y\sp)\svs F\not=\Univ\sp\,\}$ \vskip.2mm

$\null\hfill
\imply\ssp g\inc h\ssp$ and $\ssp\dom h\inc\dom g\ ]\sp\,\}\,$. \KP2 \vskip.5mm

\noin We say that \math{\tilde f} is {\it directionally
\erm{\,FK\ssp}{\rm a\,}--\,differentiable\ssp} if and only if we have\vskip.5mm\centerline{$
\dom\taurd\tilde f\times\vecs\ssigrd\tilde f\inc\dom\taurd\ssn\avarFK\tilde f
 \not=\Univ\,$.}                                           \end{definitions}

Note above that for example \œ$(\sp x+t\,u\sp)\svs E =
(\sp\ssigrd E\ssp\fvalue(\sp x\ssp,\tsigrd E\ssp\fvalue(\sp t\ssp,u\sp)))$
which would usually be denoted by the ambiguous \q{\œ$\spp(\sp x+t\,u\sp)\sp
$}\sp, assuming it implicitly understood that the linear structure of the
space $E$ is involved in the notation.

In view of \cite[Proposition 4.3.12, p.\ 104]{FK} for \math{\tilde f =
(\spp E\ssp,\spp F\sp,f\ssp)} and \œ$\,U=\dom\sn f\sp$, under the condition
that \math{\avarFK\tilde f\in\LipFKa^\emptyset} holds and that \math{ S \inc
\taurd\sp(\sp\deltaAV\KN1\fvalue F\ssp)} is point separating, we have that \math{
\tilde f} is directionally \erm{\,FK\ssp}{\rm a\,}--\,differentiable if and
only if \œ$f:$ $\deltaAV\KN1\fvalue E\iinc U\to\deltaAV\KN1\fvalue F\ssp$ is $\sp
S\,$--\,differentiable in the sense \cite[Definition 4.3.9, p.\ 103]{FK}\ssp.

Anyway \math{S\sp}--\,differentiability follows from \math{\tilde f} being
directionally \erm{\,FK\ssp}{\rm a\,}--\,differenti- able by \math{
S\inc\taurd\sp(\sp\deltaAV\KN1\fvalue F\ssp)\sp}. Then \math{
\taurd\ssn\avarFK\tilde f} is the function denoted by \sp\q{$\sp\roman df\ssp$}
in \cite[Definition 4.3.9, p.\ 103]{FK}\ssp. For the beginning of the proof,
note the implications \vskip.5mm \centerline{$
\dom\taurd\ssn\avarFK\tilde f \not=\Univ\impss22\taurd\ssn\avarFK\tilde f
\not=\Univ\impss22\avarFK\tilde f \not=\Univ\impss22\exi{E\ssp,F\sp,f\sp,U}$} \vskip.5mm $\null\hfill
\tilde f=(E\ssp,F\sp,f\ssp)\in\ConFKa\ssn^{\times 2.}\sn\times\Univ$ and $
       f\in(\sp\vecs F\ssp)\,^U$ and $\ssp U\in\taurd E\,$. \KP{9.7} \vskip.5mm

\noin In \cite[p.\ 103]{FK} the map \math{
   (\sp\deltaAV\KN1\fvalue(E\sacap E\ssp)\ssp,\deltaAV\KN1\fvalue F\sp,
     \roman df\ssp)} is called the \q{differential} but following
\cite[p.\ 206]{AS} we prefer to call \math{\avarFK\tilde f} the {\it
variation\ssp}, with the notation \œ$ \roman F\,x =
 \{\ssp(\sp\seq{\ssp u\ssp}\ssp,v\sp) : (\sp x\ssp,u\ssp,v\sp) \in
  \taurd\ssn\avarFK\tilde f\sp\,\}\ssp$ reserving the term differential to
referring to the function \math{ g = \seq{\,\ssp\roman F\,x : x \in \domm
  \taurd\ssn\avarFK\tilde f\ssp\,} } only in the case where \math{\roman F\,x}
is linear $(\sp\sigrd E\ssp)\expnota^1.]_{vs}\to\sigrd F\ssp$ for every \math{
  x\in\dom g\sp}.

\begin{lemma}\label{basic bTheta propery}

For every $\,\tilde f$ and for \œ$\,\tilde g=\abThetaFK\tilde f\sp${\rm, }%
either \œ$\,\tilde g=\Univ$ or there are $\,E\ssp,\spp F\sp,\spp f\sp,\sp g$
such that \œ$\,\tilde f=(\spp E\ssp,\spp F\sp,\spp f\ssp)$ and \œ$\,\tilde g =
(\spp E\sacap E\sp\sqcap(\sp\tfbbR\sp\sqcap\tfbbR\ssp)\ssp,\sp F\sp,\sp g\ssp)$
with \œ$\,\tilde f\sp,\sp\tilde g\in\LipFKa^\emptyset$ and $\, \dom g =
\{\ssp(\sp x\ssp,u\,;s\ssp,t\sp) : (\sp x+s\,u\sp)\svs E\ssp,
      (\sp x+t\,u\sp)\svs E\in\dom\sn f\sp\,\}\,${\rm, }and \vskip.3mm\centerline{$
g\fvalue\sp\smb W = ((\sp s-t\sp)^{\sp\mminus 1\,}(\sp
                     f\sp\fvalue(\sp x+s\,u\sp)\svs E
                   - f\sp\fvalue(\sp x+t\,u\sp)\svs E)\svs F)\svs F $}\vskip.3mm

\noin whenever $\,\smb W = (\sp x\ssp,u\,;s\ssp,t\sp)\in\dom g\,$ with $\,
                  s\not=t\ssp$.                                  \end{lemma}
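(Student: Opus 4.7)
The plan is to split according to whether the set $S$ of triples $\tilde g$ appearing inside the intersection in Definitions~\ref{defi var, etc.} is empty. If so, then by convention $\tilde g = \bigcap\emptyset = \Univ$ and the first alternative of the dichotomy holds trivially. Otherwise fix any $\tilde g_0 \in S$ and read off directly from the definition that $\tilde f = (E, F, f)$ with $E, F \in \ConFKa$ and $f \in F^{/E}$, while $\tilde g_0 = (\widetilde E, F, g_0) \in \LipFKa^\emptyset$ for $\widetilde E = E\sacap E\sp\sqcap(\sp\tfbbR\sp\sqcap\tfbbR\ssp)$, which lies in $\ConFKa$ by closure of the class under $\sacap$ together with the finite-dimensional remark after Definitions~\ref{def prods}; moreover $g_0 \inc h$ and $\dom h \inc \dom g_0$.

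Next I would dissect the relation $h$. Its defining equation forces both $(x+s\,u)\svs E$ and $(x+t\,u)\svs E$ to lie in $\dom f$; for $s \neq t$ it has the unique solution given by the formula in the lemma, whereas for $s = t$ it reduces to the tautology $f\fvalue(x+s\,u)\svs E = f\fvalue(x+t\,u)\svs E$ and is satisfied by every $y$. Combining this with $g_0$ being single-valued, $g_0 \inc h$ and $\dom h \inc \dom g_0$ yields both $\dom g_0 = \dom h$ (the claimed form) and the stated formula for $g_0$ on the subset where $s \neq t$.

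To confirm that the whole intersection coincides with $\tilde g_0$, I would show that $S$ is effectively a singleton. For any other $\tilde g_1 = (\widetilde E, F, g_1) \in S$ the preceding analysis gives $g_0 = g_1$ on $\{\,(x,u;s,t) \in \dom g_0 : s \neq t\,\}$, which is dense in $\dom g_0$ since the diagonal of $\tfbbR\sp\sqcap\tfbbR$ is nowhere dense. Both $g_0$ and $g_1$ are continuous on $\dom g_0$ by \cite[Corollary~4.1.7]{FK}, and the duality set $\taurd\sp(\sp\deltaAV\KN1\fvalue F\sp)$ is point separating, so $g_0 = g_1$ throughout and hence $\tilde g = \tilde g_0$.

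Finally, to verify $\tilde f \in \LipFKa^\emptyset$, observe first that $\dom f$ is open in $E$ because the preimage of the open set $\dom g_0$ under the continuous injection $x \mapsto (x,0;0,0)$ of $E$ into $\widetilde E$ equals $\dom f$. Given any $c \in \LLip_{}^\emptyset\sp(\sp\deltaAV\KN1\fvalue E\sp)$ with $\rng c \inc \dom f$, introduce the auxiliary curve $\tilde c\fvalue r = (c\fvalue 0\,,\ssp c\fvalue r - c\fvalue 0\,;\sp 1,\sp 0)$ for $r \in \dom c$ in $\widetilde E$. Iterating Lemma~\ref{Lip^0 in EF} on $\deltaAV\KN1\fvalue\widetilde E \in \ConFKd$, each of the four coordinate maps of $\tilde c$ is Lipschitz: three are constants, and $r \mapsto c\fvalue r - c\fvalue 0$ is Lipschitz because for each $\ell \in \taurd\sp(\sp\deltaAV\KN1\fvalue E\sp)$, linearity of $\ell$ gives $\ell\circ(c - c\fvalue 0) = \ell\circ c - \ell\fvalue c\fvalue 0 \in \LLip_{}^{}$. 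Since $\rng\tilde c \inc \dom g_0$ by construction and $g_0\circ\tilde c\fvalue r = f\fvalue c\fvalue r - f\fvalue c\fvalue 0$, the assumption $\tilde g_0 \in \LipFKa^\emptyset$ yields $g_0\circ\tilde c \in \LLip_{}^\emptyset\sp(\sp\deltaAV\KN1\fvalue F\sp)$; adding back the constant $f\fvalue c\fvalue 0$ then gives $f\circ c \in \LLip_{}^\emptyset\sp(\sp\deltaAV\KN1\fvalue F\sp)$, so $\tilde f \in \LipFKa^\emptyset$ as required.

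The main obstacle is the uniqueness argument above: since $h$ is multi-valued along the diagonal $\{s=t\}$, one genuinely needs continuity of maps in $\LipFKa^\emptyset$ combined with density of $\{s \neq t\}$ in $\dom g_0$ in order to pin down a single-valued $g_0$ across the diagonal and thereby extract a bona fide triple from the intersection.
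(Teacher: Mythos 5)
Your proof follows the same skeleton as the paper's: dispose of $\bigcap\emptyset=\Univ$, extract a witness triple from the defining class, split $h$ into its single\sp-\sp valued off\sp-\sp diagonal part and its totally multivalued diagonal part, prove uniqueness of $g$ so that the intersection collapses to the witness, read off the formulas, and finally recover $\tilde f\in\LipFKa^\emptyset$ from $g$ by an affine substitution. You deviate in two sub\sp-\sp steps, both of which are sound. For uniqueness the paper composes $g$ and $g\ar 1$ with the Lipschitz line $\gamma=\seq{\,(\sp x\ssp,u\,;\sp t+s\ssp,t\sp):s\in\bbR\,}$ and quotes property (5)$\subtext{BGN}$ secured in Proposition \ref{Lip0 is BGN}; you re\sp-\sp derive that property in place from continuity, density and point separation. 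Be aware that ``the diagonal is nowhere dense'' is not by itself the right justification: what is needed is that each diagonal point of $\dom g\ar 0$ is adherent, in the topology of $E\sacap E\sp\sqcap(\sp\tfbbR\sp\sqcap\tfbbR\ssp)$, to the off\sp-\sp diagonal part of $\dom g\ar 0$, and this rests on openness of $\dom\sn f$ along the line $s\mapsto(\sp x+(\sp t+s\sp)\,u\sp)\svs E$ --- precisely what the paper's curve $\gamma$ encodes. For the final membership the paper fixes one $(\sp x\ar 0\ssp,y\ar 0)\in f$ and exhibits $f$ globally as $y\ar 0$ plus $g$ composed with $x\mapsto(\sp x\ssp,x-x\ar 0\,;\sp 0\,,\minus 1\sp)$, invoking (2)$\subtext{BGN}$\ssp, (3)$\subtext{BGN}$\ssp, (6)$\subtext{BGN}$ and Corollary \ref{[f,g] is Lip^0}; your curve\sp-\sp by\sp-\sp curve test with base point $c\fvalue 0$ is an equivalent computation, but needs two reductions you leave implicit: a general $c$ need not satisfy $\rng\sp c\inc\dom\sn f$ (restrict $c$ to the open set $c\inve\image\dom\sn f$, which preserves membership in $\LLip_{}^{}$), and $0$ need not lie in $\dom c$ (use any point of $\dom c$ as base point).
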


\begin{proof} Fix $\ssp\tilde f\sp$, and suppose that \math{\tilde g\not=\Univ\sp
}. As \math{\bigcap\,\emptyset=\Univ\sp}, then \math{ \{\,\tilde g\ar 1\sn :
(\ssp\tilde g\ar 1)\subtext C\ssp\}\not=\emptyset\sp}, when we let \math{
(\ssp\tilde g\ar 1)\subtext C} denote the formula obtained from the four\ssp-\sp
line formula ``\,$\exists\,\sp E\ssp,$ $F\sp,\spp f\sp,\sp g\sp\,;\,\aall h\,
\ldots\ $'' occurring in the definition of \q{\ssp$\abThetaFK\tilde f\ssp$} in
\ref{defi var, etc.} above by putting there \math{\Symboo\sp\tilde g\ar 1\snnÏ}
in place of \math{\Symboo\tilde gÏ}. Hence, there are \math{E\ssp,\spp F\sp,\spp
f\sp,\sp g} such that we have \œ$\ssp\tilde f=$ \œ$
(\spp E\ssp,\spp F\sp,\spp f\ssp)\in\ConFKa\ssn^{\times 2.}\sn\times\Univ\ssp$
and \math{f\in F^{\sp/\sp E} } and \œ$\ssp\tilde g\ar 1 =
(\spp E\sacap E\sp\sqcap(\sp\tfbbR\sp\sqcap\tfbbR\ssp)\ssp,\sp F\sp,\sp g\ssp)
\in$ $\LipFKa^\emptyset\ssp$, and such that letting \math{h} be the set of all
\math{(\sp x\ssp,u\,;s\ssp,t\,;\sp y\ssp)} with the property that \math{
f\sp\fvalue(\sp x+s\,u\sp)\svs E = (\sp
f\sp\fvalue(\sp x+t\,u\sp)\svs E + (\sp s-t\sp)\,y\sp)\svs F\not=\Univ\sp},
then \math{g\inc h} and \œ$\dom h\inc\dom g\,$. Noting that \math{\tilde f}
uniquely determines $\ssp E\ssp,\spp F\sp,\spp f\sp$, if we prove that the
preceding conditions also uniquely determine \math{g\sp}, we get \math{
\tilde g=\tilde g\ar 1}, and it remains to establish the formulas for \math{
\dom g} and $\ssp g\fvalue\sp\smb W\sp$, and that also \math{
  \tilde f\in\LipFKa^\emptyset\sp}.

Since \math{f\sp\fvalue(\sp x+s\,u\sp)\svs E = (\sp
            f\sp\fvalue(\sp x+t\,u\sp)\svs E + (\sp s-t\sp)\,y\sp)\svs F \not=
\Univ} is equivalent to having \math{ s\ssp,t\in\bbR } and \math{ x\ssp,u \in
\vecs E } and \math{ y\in\vecs F } such that \math{ x+s\,u\ssp,\sp x+t\,u \in
\dom\sn f } \linebreak and  \ also $ \hfill
  f\sp\fvalue(\sp x+s\,u\sp) - f\sp\fvalue(\sp x+t\,u\sp) = (\sp s-t\sp)\,y     \hfill$
hold, \ if \ we \ put 

\noin \œ$ O = \{\ssp(\sp x\ssp,u\ssp,t\sp) : f\sp\fvalue(\sp x+t\,u\sp)\svs E
\not = \Univ\sp\,\} \ssp$ and \math{ Z = \{\ssp(\sp x\ssp,u\,;\sp t\ssp,t\sp)
: (\sp x\ssp,u\ssp,t\sp)\in O\,\} \sp}, and let \math{ h\ar 1} be the function
defined by \math{ \smb W \mapsto (\sp s-t\sp)^{\sp\mminus 1\,}(\ssp
                      f\sp\fvalue(\sp x+s\,u\sp) - f\sp\fvalue(\sp x+t\,u\sp)) }
on the set \math{W} of all \math{ \smb W=(\sp x\ssp,u\,;s\ssp,t\sp) } with \math{
s\not=t } and \math{ \{\ssp(\sp x\ssp,u\sp)\ssp\}\timesn\{\ssp s\ssp,t\ssp\}
\inc O }, we have $ h = \{\ssp(\sp x\ssp,u\,;\sp t\ssp,t\sp) :
(\sp x\ssp,u\ssp,t\sp)\in O\,\}\times\vecs F\cupss22 h\ar 1\ssp$ with \math{
  \dom h=W\cupss11 Z }.

From the preceding observations we already get the formulas for \math{\dom g}
and $g\fvalue\sp\smb W\sp$, provided that \math{g} is known to be uniquely
determined. To prove this in- \linebreak directly, supposing that there is
another \math{g\ar 1} with the properties of \math{g\sp}, there \linebreak is
\math{ \smb W = (\sp x\ssp,u\,;\sp t\ssp,t\sp) \in Z } with \œ$\ssp
g\fvalue\sp\smb W \not = g\ar 1\KN1\fvalue\sp\smb W\sp$. Taking \math{ G =
E\sacap E\sp\sqcap(\sp\tfbbR\sqcap\tfbbR\ssp) } and \linebreak \œ$ \gamma =
\seq{\,(\sp x\ssp,u\,;\sp t+s\ssp,t\sp):s\in\ssbb03 R\,} \,$, and considering
\math{ c = g\circ\gamma } and \math{ c\ar 1=g\ar 1\snn\circ\gamma }, since we
have \math{ \gamma \in \LLip_{}^\emptyset G }, and since \math{ c\fvalue s =
c\ar 1\KN1\fvalue s } for \math{ s\not=0 \sp}, by (5)$\subtext{BGN}$ we get \math{
c = c\ar 1 }, and hence in particular $\, g\fvalue\sp\smb W =
g\circ\gamma\fvalue 0 = c\fvalue 0 = c\ar 1\KN1\fvalue 0 =
g\ar 1\sn\circ\sp\gamma\spp\fvalue\spp 0 = g\ar 1\KN1\fvalue\sp\smb W\sp$. \vskip.25mm

Finally, to get \math{\tilde f\in\LipFKa^\emptyset\sp}, if \math{ f=\emptyset }
holds, the assertion is trivial directly by definition. Otherwise, we
arbitrarily fix any \œ$\ssp(\sp x\ar 0\ssp,\sp y\ar 0)\in f\sp$, and note that
for all $\ssp x$ we then have \œ$\, f\sp\fvalue x = (\ssp y\ar 0 +
   g\fvalue(\sp x\ssp,(\sp x-x\ar 0)\svs E\,;\sp 0\,,\minus 1\sp))\svs F\,$.
Further, putting \œ$\ssp G = $ \linebreak \œ$
E\sacap E\sacap(\sp\tfbbR\sacap\tfbbR\ssp)\ssp$ and \math{ \gamma =
\seq{\,(\sp x\ssp,x-x\ar 0\,;\sp 0\,,\minus 1\sp):x\in\vecs E\ssp\,}\sp}, and
using \œ$\ssp G=$ \linebreak \œ$
E\sacap E\sp\sqcap(\sp\tfbbR\sp\sqcap\tfbbR\ssp)\ssp$ and (2)$\subtext{BGN}$
and (3)$\subtext{BGN}$ and (6)$\subtext{BGN}\,$, and either
\cite[Proposition 6\ssp(b)\ssp, pp.\ 7\,--\,8]{SeBGN} or
Corollary \ref{[f,g] is Lip^0} above, we successively get \vskip.5mm

$\KP9 (\spp E\ssp,\spp E\sacap E\ssp,\seq{\,
(\sp x\ssp,\minus x\ar 0):x\in\vecs E\ssp\,}\ssp)\in\LipFKa^\emptyset\KP2$, \vskip.3mm

$\KP9(\spp E\ssp,\spp E\ssp,\seq{\,
              x-x\ar 0\sn:x\in\vecs E\ssp\,}\ssp)\in\LipFKa^\emptyset\KP2$, \vskip.3mm

$\KP9(\spp E\ssp,\spp E\sacap E\ssp,\seq{\,
     (\sp x\ssp,x-x\ar 0):x\in\vecs E\ssp\,}\ssp)\in\LipFKa^\emptyset\KP2$, \vskip.3mm

$\KP9(\spp E\ssp,\spp G\sp,\sp\gamma\ssp)\in\LipFKa^\emptyset\KP2$, \ 
$(\spp E\ssp,\spp F\sp,\sp g\spp\circ\sp\gamma\ssp)\in\LipFKa^\emptyset\KP2$, \vskip.3mm

$\KP9(\spp E\ssp,\spp F\sacap F\sp,
[\,\sp\vecs E\times\{\ssp y\ar 0\}\ssp,\ssp g\spp\circ\sp\gamma\rbrakf)
\in\LipFKa^\emptyset\KP2$, \ $\tilde f\in\LipFKa^\emptyset\,$.   \end{proof}

In fact, by \cite[Corollary 4.5.6, p.\ 137]{FK} in
Lemma \ref{basic bTheta propery} for all $\tilde f$ we even have either \œ$\sp
\abThetaFK\tilde f=\Univ$ or \œ$\sp\tilde f\in\LipFKa^{1.}\,$. In the case
where \œ$\sp\tilde f=(E\ssp,F\sp,f\ssp)$ with \œ$\sp\abThetaFK\tilde f \not =
\Univ\,$, in \cite[p.\ 105\,ff.]{FK} the function $\ssp
\taurd\ssn\abThetaFK\tilde f\sp$ is denoted by \sp\q{$\bar\vartheta f\ssp$}.

Without formulating it as an explicit lemma, we mention that similarly as in
the preceding proof, see also \cite[Proposition 9, p.\ 8]{SeBGN}\ssp, one
  deduces that \vskip.5mm

{\it for all $\,\tilde f$ and for \œ$\,\tilde g = \abDeltaFK\tilde f\sp${\rm, }%
either \œ$\,\tilde g=\Univ$ holds{\sp\rm, }or there are $\,E\ssp,\spp F\sp,\spp
f\sp,\sp g$ such that \œ$\,\tilde f=(\spp E\ssp,\spp F\sp,\spp f\ssp)$ and \œ$\,
\tilde g = (\spp E\sacap E\sp\sqcap\tfbbR\sp\,,\sp F\sp,\sp g\ssp)$ with \œ$\,
\tilde f\sp,\sp\tilde g\in\LipFKa^\emptyset$ and \œ$\, \dom g = $ $
\{\ssp(\sp x\ssp,u\ssp,t\sp):(\sp x+t\,u\sp)\svs E\in\dom\sn f\sp\,\} \,
${\rm, }\,and \vskip.3mm\centerline{$
 g\fvalue\smb Z = (\ssp t^{\sp\mminus 1\,}(\sp
         f\sp\fvalue(\sp x+t\,u\sp)\svs E - f\sp\fvalue x\sp)\svs F)\svs F $ } \vskip.3mm \noin
whenever $\,\smb Z = (\sp x\ssp,u\ssp,t\sp)\in\dom g\,$ with $\,t\not=0\,$. } \vskip.5mm

In the case \math{\tilde g\not=\Univ \sp}, by direct appeals to
\cite[Definitions 7, p.\ 8]{SeBGN} we even see that \math{ \tilde f \in
\CalDBGN^{1.}(\sp\LipFKa^\emptyset\ssp,\sn\tfbbR\ssp)\sp}. Using this, in view
of Proposition \ref{Lip0 is BGN} above, from
\cite[Proposition 10\ssp, p.\ 9]{SeBGN} we further get the following

\begin{corollary}\label{recu for BGN Lip0}

$\null\hfill\all{\tilde f\ssp,\sp k}\,\tilde f \in
\CalDBGN^{k\ssp+\sp 1.}(\sp\LipFKa^\emptyset\ssp,\sn\tfbbR\ssp)\sp\equivv\sp
\abDeltaFK\tilde f\in\CalDBGN^k(\sp\LipFKa^\emptyset\ssp,\sn\tfbbR\ssp)\,$.
 \hfill\null                                                 \end{corollary}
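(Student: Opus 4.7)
The proof is an essentially mechanical application of the abstract BGN recursion of \cite[Proposition 10, p.\ 9]{SeBGN} once its hypotheses have been checked for the class $\LipFKa^\emptyset\sp$. My plan is simply to assemble the three ingredients already laid out just above the statement.

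First, Proposition \ref{Lip0 is BGN} has established that $\LipFKa^\emptyset$ is a BGN\,--\,class on $\ConFKa$ over $\tfbbR\sp$, so the operator $\vbDelta_\stfbbR$ and the classes $\CalDBGN^k(\sp\LipFKa^\emptyset\sp,\sp\tfbbR\sp)$ are meaningfully defined and the abstract theory of \cite{SeBGN} applies here. Secondly, the informal lemma stated immediately before the corollary (the $\abDeltaFK$\,--\,analog of Lemma \ref{basic bTheta propery}) supplies the crucial structural fact: whenever $\abDeltaFK\tilde f\not=\Univ\sp$, one has $\tilde f\sp,\sp\abDeltaFK\tilde f\in\LipFKa^\emptyset\sp$, with $\abDeltaFK\tilde f$ expressed by exactly the classical difference quotient $t^{\sp\mminus 1}(\sp f\sp\fvalue(\sp x+t\,u\sp)\svs E-f\sp\fvalue x\sp)\svs F$ on the expected domain. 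In view of $\abDeltaFK\tilde f=\LipFKa^\emptyset\vbDelta_\stfbbR\tilde f$ from Definitions \ref{defi var, etc.}\ssp, this is precisely the difference quotient entering the definition of $\vbDelta_\stfbbR$ in \cite[Definitions 7, p.\ 8]{SeBGN}\ssp. Reading off $\CalDBGN^{1.}$ from those definitions, one directly sees that $\abDeltaFK\tilde f\not=\Univ$ already forces $\tilde f\in\CalDBGN^{1.}(\sp\LipFKa^\emptyset\sp,\sp\tfbbR\sp)\sp$, which supplies the base case of the BGN recursion.

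With these prerequisites in place, applying \cite[Proposition 10, p.\ 9]{SeBGN} with $\Cal C=\LipFKa^\emptyset$ yields the abstract equivalence $\tilde f\in\CalDBGN^{k\ssp+\sp 1.}(\sp\Cal C\sp,\sp\tfbbR\sp)\equivv\vbDelta_\stfbbR\tilde f\in\CalDBGN^k(\sp\Cal C\sp,\sp\tfbbR\sp)\sp$, and rewriting $\vbDelta_\stfbbR\tilde f$ as $\abDeltaFK\tilde f$ finishes the argument in both directions of the recursion simultaneously for all $\tilde f\sp,\sp k\sp$. The only genuine point to check is the identification of $\abDeltaFK$ with the restriction of the abstract operator $\vbDelta_\stfbbR$ to $\LipFKa^\emptyset\sp$, which is essentially tautological from the two definitions; I therefore anticipate no serious obstacle beyond bookkeeping of the $\Univ$ cases in order to match \cite[Proposition 10]{SeBGN} on the nose.
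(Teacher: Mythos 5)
Your proposal matches the paper's own argument: the paper likewise obtains the corollary by combining Proposition \ref{Lip0 is BGN}, the unlabelled $\abDeltaFK$\,--\,analogue of Lemma \ref{basic bTheta propery} stated just before it, the observation that $\abDeltaFK\tilde f\not=\Univ$ already yields $\tilde f\in\CalDBGN^{1.}(\sp\LipFKa^\emptyset\ssp,\sn\tfbbR\ssp)$ via \cite[Definitions 7, p.\ 8]{SeBGN}\ssp, and a direct appeal to \cite[Proposition 10, p.\ 9]{SeBGN}\ssp. There is no difference of substance between your route and the paper's.
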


\begin{proposition}\label{part Lip k recu}

For all $\ssp\tilde f\ssp,\sp k\sp$ the implications \vskip.3mm\centerline{$
\tilde f$ is directionally \erm{\,FK\ssp}{\rm a\,}--\,differentiable and $\,
\avarFK\tilde f\in\LipFKa^k$} \vskip.3mm $\mhyppy{13.2}
\imply\,\tilde f\in\LipFKa^{k\ssp+\sp 1.}\,
\imply\,\abThetaFK\tilde f\in\LipFKa^k \KP{24.6}$ hold.    \end{proposition}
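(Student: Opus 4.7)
\noin The plan is to transfer both implications through the bijection of Lemma \ref{FKd eqv FKa} to the \math{\ConFKd} picture, where \math{\LipFKa^{k\ssp+\sp 1.}} corresponds to \math{\LipFK_d^{k\ssp+\sp 1.}} and the curve-based definition (\ref{defi LipFKd^k}) of Constructions \ref{defi Lip ...} makes the classical Fr\"olicher\,--\,Kriegl chain rule and difference-quotient smoothness results directly available.

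For the first implication, assume directional \erm{FK\sp}{\rm a}\,--\,differentiability and \math{\avarFK\tilde f\in\LipFKa^k}, and write \math{\tilde f=(\spp E\ssp,\spp F\sp,\sp f\ssp)}. For an arbitrary \math{c\in\LLip_{}^{k\ssp+\sp 1.}E} with \math{\rng c\inc\dom\sn f}, I aim at \math{f\circ c\in\LLip_{}^{k\ssp+\sp 1.}F}. The defining chain \math{c\ssp,\sp c\sp'\in\LLip_{}^k E} delivers \math{[\,\sp c\sp,\sp c\sp'\sn\rbrakf\in\LLip_{}^k(E\sdcap E)} by an iteration of Lemma \ref{Lip^0 in EF}. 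The directional differentiability together with \cite[Proposition 4.3.12]{FK} then supplies the chain rule \math{(f\circ c)\sp'=\taurd\ssn\avarFK\tilde f\circ[\,\sp c\sp,\sp c\sp'\sn\rbrakf}, and since \math{\avarFK\tilde f\in\LipFKa^k} transfers to a \math{\LipFK_d^k}\,--\,map out of \math{E\sdcap E}, this composite lies in \math{\LLip_{}^k F}, whence \math{f\circ c\in\LLip_{}^{k\ssp+\sp 1.}F} as required.

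For the second implication, take \math{\tilde f\in\LipFKa^{k\ssp+\sp 1.}} and let \math{g=\taurd\ssn\abThetaFK\tilde f} be the map of Lemma \ref{basic bTheta propery}. The base case \math{\abThetaFK\tilde f\in\LipFKa^\emptyset} when \math{\tilde f\in\LipFKa^{1.}} is \cite[Corollary 4.5.6]{FK}, quoted after that lemma. To upgrade to \math{\LipFKa^k}, I would combine the integral representation \math{g\fvalue(\sp x\ssp,u\,;s\ssp,t\sp)=\int_0^1\taurd\ssn\avarFK\tilde f\ssp\fvalue(\sp x+(\sp rs+(\sp 1-r\sp)t\sp)u\ssp,\sp u\sp)\,\roman dr} (valid also across \math{s=t}) with the standard \erm{FK}\,--\,fact that \math{\tilde f\in\LipFKa^{k\ssp+\sp 1.}} forces \math{\avarFK\tilde f\in\LipFKa^k} via the chain rule of \cite[\S\,4.3]{FK} applied to affine curves. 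The smooth parameter dependence of Riemann integrals from \cite[\S\,4.5]{FK} then transfers the \math{\LLip_{}^k}\,--\,regularity from the integrand to \math{g} along every \math{\LLip_{}^k}\,--\,curve into the source.

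The principal technical obstacle is structural bookkeeping: \math{\avarFK\tilde f} sits over \math{E\sacap E}, whereas \math{\abThetaFK\tilde f} lives on the hybrid product \math{E\sacap E\sp\sqcap(\sp\tfbbR\sp\sqcap\tfbbR\ssp)}, which is legitimate only because \math{\tfbbR\sp\sqcap\tfbbR=\tfbbR\sacap\tfbbR} by \cite[Remark 3.3.4]{FK}. One must check that the curve-based \math{\LipFKa^k}\,--\,test transfers cleanly between these arc-generated mixed products and the \math{\sdcap}\,--\,counterparts in \math{\ConFKd}; once this is done, both implications reduce to a controlled repackaging of the classical \erm{FK}\,--\,chain rule and of the smoothness of the extended difference quotient already present in \cite{FK}.
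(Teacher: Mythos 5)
Your route is in substance the paper's own: the paper disposes of the degenerate values of $k$ and then cites \cite[Definition 4.3.13, Theorem 4.3.24, Corollary 4.5.6]{FK} outright, and these are exactly the results whose proofs you are sketching --- the characterization of $\LLip_{}^{k\ssp+\sp 1.}$ maps through differentiability of the variation for the first implication, and the regularity of the extended difference quotient for the second. Since you still delegate the genuinely hard steps (the convenient\,--\,calculus chain rule $(\sp\ell\circ f\circ c\sp)\ssp'=\ell\circ\taurd\ssn\avarFK\tilde f\circ[\,\sp c\sp,\sp c\,'\rbrakf$ along $\LLip_{}^{k}$ curves, and the $\LLip_{}^{k}$\sp-\sp dependence of weak Riemann integrals on parameters) to \cite{FK}, your argument is not more self\sp-\sp contained than the citation; it is the same proof with some of the intermediate steps of \cite{FK} made explicit. (Incidentally, \cite[Proposition 4.3.12]{FK} compares differentiability notions; the chain rule you need is part of the Theorem 4.3.24 circle of results.)

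Two of the steps you do make explicit need repair. First, the representation $\,g\fvalue(\sp x\ssp,u\,;s\ssp,t\sp)=\int_0^1\taurd\ssn\avarFK\tilde f\ssp\fvalue(\sp x+(\sp r\ssp s+(1-r)\ssp t\sp)\,u\ssp,u\sp)\,\roman dr\,$ is not valid on all of $\dom g\sp$: by Lemma \ref{basic bTheta propery} membership of $(\sp x\ssp,u\,;s\ssp,t\sp)$ in $\dom g$ only requires the two endpoints $(\sp x+s\,u\sp)\svs E$ and $(\sp x+t\,u\sp)\svs E$ to lie in $\dom\sn f\sp$, whereas the integrand requires the whole segment between them to do so; with $\dom\sn f$ open but not convex this fails for $s\ssp,t$ far apart. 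The conclusion survives --- away from the diagonal $g$ is an explicit algebraic combination of values of $f$ and of $(\sp s-t\sp)^{\sp\mminus 1}$, and near the diagonal the segment condition holds locally because $\{\,r:(\sp x+r\,u\sp)\svs E\in\dom\sn f\,\}$ is open in $\taubb_R$ --- but this localization has to be stated, since it is precisely where the extension across $s=t$ happens. Second, you never address $k\not\in\infty\ssp\yplus$ (both classes are then empty, so the implications are vacuous) nor $k=\inftyy$ (where one must pass through $\LipFKa^\inftyy=\{\,\tilde f:\all{k\in\bbNo}\,\tilde f\in\LipFKa^k\ssp\}$); your curve\sp-\sp by\sp-\sp curve argument only literally applies to $k\in\bbNo\ssp$. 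The paper deals with both points in the first sentence of its proof.
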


\begin{proof} In view of the lines 4\,--\,7 after
Definitions \ref{defi var, etc.} above, and also noting that \œ$\sp\LipFKa^k=
\emptyset$ if \œ$k\not\in\infty\ssp\yplus\sp$, and that \œ$\sp
\LipFKa^\inftyy=\{\,\tilde f:\all{k\in\bbNo}\,\tilde f\in\LipFKa^k\ssp\}\,$,
the as- serted implications follow from \cite[Definition 4.3.13,
Theorem 4.3.24, Corollary 4.5.6, pp.\ 104\,--\,106, 110\,--\,111,
                                      137\,--\,138]{FK}\ssp.     \end{proof}

Generally defining \vskip.3mm \centerline{$
\bmii8G\sp\Circ\spp\bmii8F\, = \bigcap\ssp\{\ssp(X\sp,Z\sp,g\sp\circ\snn f\sp)
 : \exi{\sp Y\sn\in\Univ\sp}\,\bmii8F\, =
   (X\sp,Y\spp,f\sp)\text{ and }\bmii8G\,=(\ssp Y\spp,Z\sp,g\sp)\ssp\}\,$,}\vskip.3mm

\noin from \cite[Summary 2.4.4\ssp(iii)\ssp, (6\œ$)\imply($3)\ssp, p.\ 51]{FK}
we get (1)\ssp, and directly from (\ref{defi LipFKa^k}) and
(\ref{defi LipFKd^k}) of Constructions \ref{defi Lip ...}\ssp, noting also
Lemma \ref{FKd eqv FKa} above, we get (2) in the next

\begin{proposition}\label{basic Lip^k properties}

For all $\,E\ssp,F\sp,\tilde f\ssp,\ssp\tilde g\ssp,\sp k\ssp,\ssp\ell\,$ it
holds that \vskip.5mm

{\rm(1)} \ $E\ssp,F\in\ConFKa$ and $\,\ell\in\Cal L\,(E\ssp,F\ssp)\impss22
           (E\ssp,F\sp,\sp\ell\,)\in\LipFK_a^\inftyy\,,$\vskip.3mm

{\rm(2)} \ $\tilde f\ssp,\ssp\tilde g\in\LipFK_a^k \impss22
          \tilde g\spp\Circ\snn\tilde f\in\LipFK_a^k$ or $\,
           \tilde g\spp\Circ\snn\tilde f=\Univ\,$.         \end{proposition}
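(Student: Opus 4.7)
The plan is to reduce both parts to the corresponding facts in the dualized framework $\ConFKd$ via Lemma \ref{FKd eqv FKa}, since by (\ref{defi LipFKa^k}) the class $\LipFK_a^k$ is exactly the image of $\LipFK_d^k$ under the structure changer $\tauDV\ftimes\tauDV\ftimes\id$, and the bijection $\tauDV\ssp|\,\ConFKd$ furnishes canonical $\ConFKd$ representatives for the spaces appearing in $\ConFKa$.

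For (1), given $E\ssp,F \in \ConFKa$ and $\ell \in \Cal L(E, F)$, let $E\ar 1, F\ar 1 \in \ConFKd$ be the unique preimages under $\tauDV$ provided by Lemma \ref{FKd eqv FKa}. It suffices to show $(E\ar 1, F\ar 1, \ell) \in \LipFK_d^k$ for every $k \in \inftyy\ssp\yplus$, as then (\ref{defi LipFKa^k}) yields $(E, F, \ell) \in \LipFK_a^\inftyy$. By (\ref{defi LipFKd^k}) and (\ref{defi L4}), this reduces to verifying that $(\tilde\ell\sp\circ\sp\ell)\sp\circ\sp c \in \LLip_{}^k$ for every $c \in \LLip_{}^k E\ar 1$ and every $\tilde\ell \in \taurd F\ar 1$. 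Since $\tilde\ell$ is a continuous linear form on $F = \tauDV\KN1\fvalue F\ar 1$ and $\ell \in \Cal L(E, F)$, the composite $\tilde\ell\sp\circ\sp\ell$ is a continuous linear form on $E$, hence lies in $\taurd E\ar 1$ by the dual structure built into the $\ConFKd$ condition on $E\ar 1$; the conclusion then follows directly from $c \in \LLip_{}^k E\ar 1$.

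For (2), suppose $\tilde g\spp\Circ\snn\tilde f \not= \Univ$; then there are $E\ssp,F\sp,\spp G \in \ConFKa$ and $f, g$ with $\tilde f = (E, F, f)$, $\tilde g = (F, G, g)$ in $\LipFK_a^k$ and $g\sp\circ\snn f$ well-defined. By (\ref{defi LipFKa^k}) and Lemma \ref{FKd eqv FKa} there are $E\ar 1, F\ar 1, G\ar 1 \in \ConFKd$ with $(E\ar 1, F\ar 1, f), (F\ar 1, G\ar 1, g) \in \LipFK_d^k$. For any $c \in \LLip_{}^k E\ar 1$ with $\rng c \inc \dom f$, a first application of (\ref{defi LipFKd^k}) yields $f\snn\circ\sp c \in \LLip_{}^k F\ar 1$, and a second application to $\tilde g$ (noting $\rng(f\snn\circ\sp c) \inc \dom g$) gives $g\snn\circ\sp f\snn\circ\sp c \in \LLip_{}^k G\ar 1$. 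Hence $(E\ar 1, G\ar 1, g\sp\circ\snn f) \in \LipFK_d^k$, and interpreting by $\tauDV\ftimes\tauDV\ftimes\id$ gives $\tilde g\spp\Circ\snn\tilde f \in \LipFK_a^k$. Neither part presents a genuine obstacle: (1) reduces to the defining property of continuous linearity combined with the dual structure of $\ConFKd$, and (2) is a double application of the chain rule at the level of test curves hardwired into (\ref{defi LipFKd^k}).
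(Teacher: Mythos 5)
Your argument is correct. For part (2) you are doing exactly what the paper does: its proof of (2) consists of the remark that the claim follows directly from (\ref{defi LipFKa^k}) and (\ref{defi LipFKd^k}) of Constructions \ref{defi Lip ...} together with Lemma \ref{FKd eqv FKa}, and your unwinding --- pulling both triples back to $\ConFKd$ along the bijection of Lemma \ref{FKd eqv FKa}, which in particular forces the two preimages of the common middle space to coincide (both equal $\deltaAV\KN1\fvalue F\ssp$), and then chaining the test\sp-\sp curve condition twice --- is that argument written out; your side remarks about ranges are harmless but unnecessary, since (\ref{defi LipFKd^k}) quantifies over all curves and composition restricts domains automatically. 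For part (1) the routes differ: the paper simply cites \cite[Summary 2.4.4\ssp(iii), p.\ 51]{FK}\ssp, that is, the Fr\"olicher--Kriegl equivalence of continuity with the Lipschitz conditions for linear maps between convenient spaces, whereas you verify the claim directly. Your verification is sound, and the point that makes it work is the one you invoke: by Lemma \ref{FKd eqv FKa} the preimage in $\ConFKd$ of a space $F\in\ConFKa$ is $\deltaAV\KN1\fvalue F\sp$, whose dual structure is precisely $\Cal L\,(F\ssp,\sn\tfbbR\ssp)\sp$, so precomposing a test functional $\tilde\ell$ with the continuous linear $\ell$ yields an element of $\Cal L\,(E\ssp,\sn\tfbbR\ssp)=\taurd\sp(\sp\deltaAV\KN1\fvalue E\ssp)\sp$, and the $\LLip_{}^k$ condition for $\ell\circ c$ is then inherited from that for $c$ with no further analysis. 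The direct route buys a self-contained proof of (1) at essentially no cost; the paper's citation buys brevity by deferring this (as you show, trivial) verification to \cite{FK}\ssp.
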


\begin{theorem}\label{main Thm}

The equality $\,\LipFKa^k=\CalDBGN^k(\sp\LipFKa^\emptyset\ssp,\sn\tfbbR\ssp)\,
$ holds for all $\ssp k\ssp$.                                  \end{theorem}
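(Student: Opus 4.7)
The plan is to argue by induction on $k$. For $k\notin\infty\yplus$ both classes are empty by the conventions in Constructions \ref{defi Lip ...} and in the BGN framework, and the case $k=\inftyy$ reduces to the finite cases via $\LipFKa^\inftyy=\bigcap_{n\in\bbNo}\LipFKa^n$ (and analogously on the BGN side). The base case $k=0$ is immediate: Proposition \ref{Lip0 is BGN} exhibits $\LipFKa^\emptyset$ as a \erm{BGN}\,--\,class on $\ConFKa$ over $\tfbbR$, and the BGN construction sets its level\,--\,zero class equal to the starting class itself, so $\CalDBGN^\emptyset(\LipFKa^\emptyset,\tfbbR)=\LipFKa^\emptyset$.

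For the inductive step, fix a finite $k$ and assume $\LipFKa^k=\CalDBGN^k(\LipFKa^\emptyset,\tfbbR)$. Corollary \ref{recu for BGN Lip0} recasts \erm{BGN}\,--\,membership at level $k+1$ as $\abDeltaFK\tilde f\in\CalDBGN^k(\LipFKa^\emptyset,\tfbbR)$, which equals $\LipFKa^k$ by hypothesis, so everything reduces to the equivalence $\tilde f\in\LipFKa^{k+1}\equivv\abDeltaFK\tilde f\in\LipFKa^k$. For the forward direction, Proposition \ref{part Lip k recu} yields $\abThetaFK\tilde f\in\LipFKa^k$, and comparing Lemma \ref{basic bTheta propery} with the italicized statement following it shows that $\abDeltaFK\tilde f$ factors as the composite $\abThetaFK\tilde f\Circ\iota$ where $\iota\colon(x,u,t)\mapsto(x,u,t,0)$ is the continuous linear insertion from $E\sacap E\sp\sqcap\tfbbR$ into $E\sacap E\sp\sqcap(\tfbbR\sp\sqcap\tfbbR)$. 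Since $\tfbbR$ is finite\,--\,dimensional, both products coincide with their $\sacap$\,--\,counterparts by the remark after Definitions \ref{def prods} and lie in $\ConFKa$; Proposition \ref{basic Lip^k properties}(1) places $\iota$ in $\LipFKa^\inftyy$, and part (2) then delivers $\abDeltaFK\tilde f\in\LipFKa^k$.

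For the reverse direction, $\abDeltaFK\tilde f\in\LipFKa^k\subset\LipFKa^\emptyset$ has, by the italicized statement, domain containing every $(x,u,0)$ with $x\in\dom\sn f$, and continuity of the difference quotient at $t=0$ forces each directional derivative to exist, so $\tilde f$ is directionally \erm{FKa}\,--\,differentiable; moreover $\avarFK\tilde f$ is the composite of $\abDeltaFK\tilde f$ with the continuous linear map $(x,u)\mapsto(x,u,0)$, again in $\LipFKa^\inftyy$ by Proposition \ref{basic Lip^k properties}(1), so part (2) gives $\avarFK\tilde f\in\LipFKa^k$ and the first implication of Proposition \ref{part Lip k recu} yields $\tilde f\in\LipFKa^{k+1}$. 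The main obstacle I anticipate is the bookkeeping around the $\Univ$\,--\,clauses built into the definitions of $\abDeltaFK\tilde f$, $\abThetaFK\tilde f$, and $\avarFK\tilde f$: at each invocation one must verify that the relevant triples are genuine (not $\Univ$) and that the factorisations $\abDeltaFK\tilde f=\abThetaFK\tilde f\Circ\iota$ and $\avarFK\tilde f=\abDeltaFK\tilde f\Circ(\,\cdot\,)$ hold as structured triples rather than merely at the level of their underlying set\,--\,maps.
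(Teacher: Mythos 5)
Your proposal is correct and follows essentially the same route as the paper's own proof: induction anchored by Corollary \ref{recu for BGN Lip0}, the factorisations $\abDeltaFK\tilde f=\abThetaFK\tilde f\Circ\tilde\Iota\ar 3$ and $\avarFK\tilde f=\abDeltaFK\tilde f\Circ\tilde\Iota\ar 2$ through continuous linear insertions handled by Proposition \ref{basic Lip^k properties}, and Proposition \ref{part Lip k recu} at both ends of the equivalence. The only point where the paper is more explicit is the auxiliary claim that $\abDeltaFK\tilde f\not=\Univ$ already forces directional \erm{\,FK\ssp}{\rm a\,}--\,differentiability, which it proves by composing $\taurd\ssn\abDeltaFK\tilde f$ with the curve $t\mapsto(\sp x\ssp,u\ssp,t\sp)$ and reading off the value at $0$ --- the precise form of your continuity-of-the-difference-quotient remark.
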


\begin{proof} Let \œ$\ssp\roman C^{\,k} = \CalDBGN^k(\sp\LipFKa^\emptyset\ssp,\sn
\tfbbR\ssp)\,$, to simplify the notations a bit. Since we \linebreak have \œ$\,
\roman D\,^k=\emptyset$ if \œ$k\not\in\infty\ssp\yplus\sp$, and since \œ$\sp
\roman D\,^\infty=\{\,\tilde f:\all{k\in\bbNo}\,\tilde f\in\roman D\,^k\ssp\}$
when $\sp\roman D\,^k$ denotes either of $\LipFKa^k$ and $\sp\roman C^{\,k}\sp
$, it suffices to prove \œ$\ssp\all{k\in\bbNo}\,(k)\subtext A$ by induction,
letting $(k)\subtext A$ mean that $\LipFKa^k=\roman C^{\,k}$ holds. We have $
(\emptyset)\subtext A$ trivially.

Before considering the inductive step, we note the auxiliary result ($*$) that
$\sp\tilde f$ is directionally \erm{\,FK\ssp}{\rm a\,}--\,differentiable
whenever \œ$\abDeltaFK\tilde f\not=\Univ$ holds. Indeed, then \œ$
\abDeltaFK\tilde f\in\LipFKa^\emptyset$ and there are $E\ssp,F\sp,f$ with \œ$
\tilde f=(E\ssp,F\sp,f\ssp)\,$. For arbitrarily fixed \œ$x\in\dom\sn f$ and \œ$
u\in\vecs E\,$, putting \œ$\,c = \taurd\ssn\abDeltaFK\tilde f\circ\seq{\,
(\sp x\ssp,u\ssp,t\sp):t\in\ssbb04 R\,}\,$, we have\linebreak \œ$
(\sp\tfbbR\sp\,,\spp F\sp,\sp c\,)\in\LipFKa^\emptyset$ with \œ$0\in\dom c$
and \œ$t^{\sp\mminus 1}\sp(\sp f\sp\fvalue(\sp x+t\,u\sp) - f\sp\fvalue x\sp)=
c\fvalue t$ when \œ$0\not=$ \œ$t\in\dom c\,$. Then having \œ$
(\sp\tfbbR\sp\,,\sn\tfbbR\sp\,,\ssp\ell\sp\circ\spp c\,)\in\LipFKa^\emptyset$
for all \œ$\ssp\ell\in\taurd(\sp\deltaAV\KN1\fvalue F\ssp)\,$, a glance at
Definitions \ref{defi var, etc.} shows that we have \œ$
(\sp x\ssp,u\ssp,c\fvalue 0\sp)\in\taurd\ssn\avarFK\tilde f\sp$. From this we
see that \œ$\dom\taurd\ssn\avarFK\tilde f = \dom\sn f\times\vecs E\,$, hence
that \œ$\dom\taurd\tilde f\times\vecs\ssigrd\tilde f \inc
\dom\taurd\ssn\avarFK\tilde f$ \linebreak \œ$\not=\Univ\sp$ holds, and
consequently that $\tilde f$ is directionally
\erm{\,FK\ssp}{\rm a\,}--\,differentiable. Note \linebreak that $\sp
\taurd\ssn\avarFK\tilde f\sp$ is a function since $\sp
\taurd(\sp\deltaAV\KN1\fvalue F\ssp)$ is point separating.

Now assuming $(k)\subtext A$ with \œ$k\in\bbNo\ssp$, to get \œ$
(\sp k+1\adot)\subtext A\ssp$, first let \œ$\sp
\tilde f\in\LipFKa^{k\ssp+\sp 1.}\sp$. By Proposition \ref{part Lip k recu}
then \œ$\abThetaFK\tilde f\in\LipFKa^k\,$. Since for \œ$E=\ssigrd\tilde f$ and
\œ$H=E\sacap E\sp\sqcap\tfbbR$ and \œ$ \tilde\Iota\ar 3 =
(\sp H\sp,E\sacap E\sp\sqcap(\sp\tfbbR\sp\sqcap\tfbbR\ssp)\ssp,
\seq{\,(\sp x\ssp,u\,;\sp s\ssp,0\sp):\smb X=(\sp x\ssp,u\ssp,s\sp) \in
\vecs H\sp\,}\sp)$ we have $\tilde\Iota\ar 3$ a continuous linear map with \œ$
\abDeltaFK\tilde f=\abThetaFK\tilde f\Circ\tilde\Iota\ar 3\not=\Univ\,$, by
$(k)\subtext A$ and Proposition \ref{basic Lip^k properties} we obtain \œ$
\abDeltaFK\tilde f\in\LipFKa^k\inc\roman C^{\,k}\sp$, whence \œ$\sp\tilde f\in
\roman C\,^{k\ssp+\sp 1.}$ follows by
Corollary \Biggerlineskip1 \ref{recu for BGN Lip0} above. Conversely, letting
\œ$\sp\tilde f\in\roman C\,^{k\ssp+\sp 1.}\sp$, by
Corollary \ref{recu for BGN Lip0} again we have \œ$\abDeltaFK\tilde f\in$ \œ$
\roman C^{\,k}\inc\LipFKa^k\,$, in view of $(k)\subtext A\,$. For \œ$
\tilde\Iota\ar 2=(\sp G\sp,\spp H\sp,\seq{\,(\sp x\ssp,u\ssp,0\sp):\smb X =
(\sp x\ssp,u\sp)\in\vecs G\sp\,}\sp)$ \Biggerlineskip1 with \œ$G=E\sacap E$
having $\tilde\Iota\ar 2$ a continuous linear map with \œ$\avarFK\tilde f =
\abDeltaFK\tilde f\Circ\tilde\Iota\ar 2$ \Biggerlineskip1 \œ$\not=\Univ\,$,
again by Proposition \ref{basic Lip^k properties} we obtain \œ$\avarFK\tilde f
\in\LipFKa^k\,$. Now \œ$\sp\tilde f\in\LipFKa^{k\ssp+\sp 1.}$ fol- lows by
($*$) and Proposition \ref{part Lip k recu} above.               \end{proof}

\begin{remark}

In \cite{SeBGN}\ssp, we used
\cite[Lemma 50, Corollary 51, pp.\ 26\,--\,27]{SeBGN} as an auxil- iary tool
when establishing \cite[Theorem 52]{SeBGN} corresponding to
Theorem \ref{main Thm} above. Here we did not need to do this kind of work
since the corresponding one is already done in sufficient detail in
\cite[Lemma 4.1.5, Proposition 4.3.11, Theorem 4.5.4, pp.\ \nolinebreak 84,
104, 136\,--\,137]{FK}\ssp.                                     \end{remark}


\end{document}